\newtheorem{theorem}{Theorem}[section]
\newtheorem{remark}{Remark}[section]
\newcommand{\bld}[1]{\hbox{\boldmath$#1$}}    
\newcommand{\Th}{\mathcal{T}_h}
\newcommand{\Tho}{\mathcal{T}_h^t}
\newcommand{\Eh}{\mathcal{E}_h}
\newcommand{\Eho}{\mathcal{E}_h^t}
\newcommand{\HDG}{$\mathsf{HDG}$}
\newcommand{\TaH}{$\mathsf{TH}$}
\newcommand{\SV}{$\mathsf{SV}$}
\begin{document}
\title[ALE-HDG]{Arbitrary Lagrangian-Eulerian  hybridizable  discontinuous Galerkin methods for 
incompressible flow with moving boundaries and
interfaces}
\author{Guosheng Fu}
\address{Department of Applied and Computational Mathematics and 
Statistics, University of Notre Dame, USA.}
\email{gfu@nd.edu}
% \thanks{The author gratefully acknowledges the partial support of this work
% under AFOSR contract FA9550-12-1-0399.}

\keywords{ALE, HDG, divergence-free, moving domain, two-phase flow, surface
tension}
\subjclass{65N30, 65N12, 76S05, 76D07}
\begin{abstract}
We present
a class of Arbitrary Lagrangian-Eulerian
 hybridizable discontinuous Galerkin methods for the incompressible flow with 
moving boundaries and interfaces including two-phase flow with surface
tension.
\end{abstract}
\maketitle

\section{Introduction}
\label{sec:intro}
%Incompressible flow phenomena arise in numerous disciplines in science and engineering. 
%The finite element methods 
%have been widely used for the simulation of 
%incompressible flow since the 1970s; see the monographs
%\cite{GiraultRaviart86,Glowinski03, Layton08,BoffiBrezziFortin08,Elman14,John16}.
%In the last coupled of years, 
%there is a great interest in the use of 
%finite element methods for incompressible flow that yield 
%an exactly divergence-free velocity approximation 
%({\it strong mass conservation}), see the recent review
%article \cite{JohnLinkeMerdonNeilanRebholz17}.
%Stongly mass conserving finite element methods usually 
%enjoy the property of  {\it pressure-robustness} 
%\cite{JohnLinkeMerdonNeilanRebholz17}, which make them more accurate 
%than {\it non-pressure-robust} methods for certain flow regimes, e.g. 
%when the momentum balance is dominated
%by forces of a gradient type \cite{LinkeRebholz19}.

Incompressible flow problems with moving boundaries and interfaces
appear
naturally in a number of practical applications such as 
free surface flow, multi-phase flow, and fluid-structure interactions (FSI) \cite{Gross11,Richter17}. 
Using the terminology and categorization used in \cite{Tezduyar98}, a method
for flows with moving boundaries can be an interface-tracking (boundary-fitted) 
method or an interface-capturing (non-boundary-fitted) method, or possibly a combination of
the two. Examples of boundary-fitted methods include the 
arbitrary Lagrangian-Eulerian (ALE) method 
\cite{HughesLiuZimmermann81,Donea82, Donea04} and the space-time
method \cite{Tezduyar92,Tezduyar92b,MasudHughes97,vanderVegt08}, 
examples of non-boundary-fitted methods 
include the immersed boundary method \cite{Peskin02, Mittal05}, 
the immersed finite element
method \cite{Li98,ZhangGerstenberger04}, the fictitious domain method 
\cite{Glowinski01}, and the extended/generalized finite element method 
\cite{Belytschko03,FriesBelytschko10}, among many others.

In this paper, we present novel finite element methods for incompressible
flow with moving boundaries and interfaces using the ALE framework.
Our spatial discretization is based on a 
novel hybridizable discontinuous Galerkin (HDG) formulation
that may produce an exactly divergence-free velocity approximation;
see \cite{Cockburn16} for a review of HDG methods.
HDG methods that yield an exactly divergence-free velocity
approximation have been extensively studied for incompressible 
flow on static meshes, see, e.g., 
\cite{KonnoStenberg12,CockburnSayas14,
LehrenfeldSchoberl16, FuJinQiu19,RhebergenWells18}.
The extension of these schemes to moving domain problems was 
recently studied by Horvath and Rhebergen 
\cite{HorvathRhebergen19a,HorvathRhebergen19b} within the space-time
framework, and by Neunteufel \cite{Neunteufel17}
within the ALE  framework.
In this paper, we present a novel ALE-HDG scheme that is 
computationally more efficient than 
those in \cite{HorvathRhebergen19a,HorvathRhebergen19b,Neunteufel17}
due to the use of a novel set of hybrid unknowns, which includes 
the tangential component of velocity ({\it TV}) and normal-normal component of 
the stress ({\it NNS}) on the mesh skeleton. We name the new HDG scheme the {\it
TVNNS-HDG} scheme.

ALE-based finite element methods have been widely used 
for moving interface incompressible flow problems, see the 
recent review \cite{Tezduyar18}. 
The first ALE-based HDG method was introduced in 2016 by 
Sheldon et. al. \cite{Sheldon16} for FSI problems.
However, the spatial discretization for fluids 
in \cite{Sheldon16} was based on the  HDG scheme 
\cite{NguyenPeraireCockburnHDGStokes10} for Stokes flow which does not produce an
exactly divergence-free velocity approximation.
In 2017, Neunteufel \cite{Neunteufel17} introduced the 
ALE-$H(\mathrm{div})$-conforming-HDG
method for moving domain incompressible flows and FSI
in his diploma thesis.
However, due to the use of 
{\it Piola} mapping needed for the $H(\mathrm{div})$-conforming velocity finite
element space, the 
ALE-$H(\mathrm{div})$-conforming-HDG
formulation \cite{Neunteufel17} is significantly more complex than
ALE schemes based on 
classical continuous-velocity-based finite element methods like the 
Taylor-Hood element.
In this paper, we present a novel HDG scheme termed TVNNS-HDG 
that is based on completely discontinuous finite element spaces 
using the standard {\it pull-back} mappings, which
produces a divergence-conforming and divergence-free velocity approximation when the underlying mesh consists of affine simplices.
The $H(\mathrm{div})$-conformity of the scheme is achieved via 
the classical hybridization technique 
\cite{ArnoldBrezzi85,Cockburn04}, where we first relax the
$H(\mathrm{div})$-conformity of the velocity space then weakly impose it
back via a Lagrange multiplier (which is the normal-normal-stress variable).
Our spatial discretization on conforming affine simplicial meshes
is mathematically equivalent to the divergence-free HDG scheme
proposed back in 2010 by Lehrenfeld \cite{Lehrenfeld10}.
Hence, on static conforming simplicial meshes,  it readily enjoys properties such 
as high order accuracy and optimal convergence, 
important global and local conservation properties, energy-stability,
pressure robustness, a minimal amount of numerical dissipation and
computational efficiency \cite{LehrenfeldSchoberl16, Schroeder19}.

The rest of the paper is organized as follows.
In Section \ref{sec:ale}, we first introduce the TVNNS-HDG scheme for 
the steady-state incompressible Stokes equations, 
then apply it to the moving domain Navier-Stokes equations using the ALE
framework.
In Section \ref{sec:twophase}, we extend the ALE-TVNNS-HDG scheme 
to incompressible two-phase flow with surface tension.
Numerical results are presented in Section \ref{sec:num}.
We conclude in Section \ref{sec:conclude}.

\section{The ALE-TVNNS-HDG scheme for moving domain Navier-Stokes equations}
\label{sec:ale}
\subsection{The ALE-Navier-Stokes equations}
Consider the Navier-Stokes equation 
on a smooth-varying moving domain $\Omega^t\subset\mathbb{R}^d$, 
$d\in\{2,3\}$, for $t\in [0, T]$, 
given by a smooth ALE map \cite{Nobile01,Quaini08}: 
\begin{align}
  \label{ale}
  \mathcal{A}_t:\Omega^0\subset \mathbb{R}^d\longrightarrow \Omega^t, 
  \quad\quad \mathbf x(\mathbf x_0, t) = \mathcal{A}_t(\mathbf x_0),\quad  
  \forall t\in [0,T],
\end{align}
where the initial configuration $\Omega^0$ at $t=0$ is considered as the reference
configuration. 
The Navier-Stokes equations in ALE non-conservative form 
\cite{Nobile01,Quaini08} is given as follows: 
\begin{subequations}
\label{ns-eq-ale}
  \begin{alignat}{2}
\label{ns-eq-ale1}
  \rho\left.\frac{\partial \bld u}{\partial t}\right|_{\mathbf x_0} 
  +\rho(\bld u-\bld \omega)\cdot\nabla_{\mathbf x}\bld u 
  -\mathrm{div}_{\mathbf x}(2\mu\mathbf{D_x}(\bld u)  -p\bld {I})
    =&\; \rho\bld f, \quad&& \text{in} \;\Omega^t\times [0,T]\\
\label{ns-eq-ale2}
    \mathrm{div}_{\mathbf x}\bld u = &\;0,\quad     && \text{in} \;\Omega^t\times [0,T]
    %\quad in \;\;\Omega,
\end{alignat}
\end{subequations}
where $\mathbf D_{\mathbf x}$ is the symmetric strain rate tensor
\begin{align*}
  \mathbf {D_x}(\bld u) = \frac12(\nabla_{\mathbf x} \bld u + 
  (\nabla_{\mathbf x} \bld u)^T),
\end{align*}
$\bld I$ is the identity tensor, 
$\bld u(\mathbf x,t)$ is the velocity field, $p(\mathbf x, t)$ is the pressure, 
$\rho$ is the (constant) fluid density, $\mu$ is the (constant) coefficient of 
dynamic viscosity, $\bld f$ is the body forces, and
\[
\bld \omega(\mathbf x, t) =
  \left.\frac{\partial \mathbf x}{\partial
  t}\right|_{\mathbf x_0} = 
  \frac{\partial \mathcal A_t}{\partial
  t}\circ\mathcal{A}_t^{-1}(\mathbf x) 
\]
denotes the domain velocity. 
Throughout this section, we assume that ALE map \eqref{ale}
is given, although in most applications it represents a further unknown of the
problem.

\subsection{The TVNNS-HDG scheme for the steady-state Stokes equations}
In order to present our HDG discretization for the equations
\eqref{ns-eq-ale}, we first present the 
TVNNS-HDG scheme for the following 
steady-state Stokes equations on 
the domain $\Omega^t$, with a fixed time $t\in[0,T]$:
\begin{alignat}{2}
  \label{stokes}
  \Big\{
  \begin{split}
    -\mathrm{div}_{\mathbf x}
    (2\mu\mathbf{D}_{\mathbf
  x}(\bld u)  -p\bld {I})
     =&\; \rho\bld f,\\
    \mathrm{div}_{\mathbf x}\bld u = &\;0,
  \end{split}\quad&& \text{in} \;\Omega^t, 
  \quad\quad \bld u =\;\bld 0\; \text{on} \;\partial\Omega^t,
  \end{alignat}
  where the stress ${\bld \sigma}:= 
    2\mu\mathbf{D}_{\mathbf
 x}(\bld u)  -p\bld {I}$.
We assume that the domain $\Omega^t$ is obtained from the reference domain
$\Omega^0$ by the mapping \eqref{ale}. 
Let $\Th^0:=\{T^0\}$ be a conforming 
simplicial triangulation of $\Omega^0$, and let $\Eh^0$ be the set of facets of
$\Th^0$.
We denote $\Tho:=\{\mathcal{A}_t(T^0)\}$ as the mapped triangulation on 
  the domain $\Omega^t$, 
 and denote  $\Eho$  as the set of facets of $\Tho$.
 We set $h$ to be the maximum mesh size of $\Tho$.
Given an affine simplex $\widehat{S}\subset \mathbb{R}^d$, $d=1,2,3$, we denote 
$\mathcal{P}^m(\widehat{S})$ as the space of polynomials of degree at most $m$ on the mesh for any integer $m\ge 0$. Given any mapped simplex $S=\mathcal{A}(\widehat{S})$, with an abuse of notation, we also denote
\[
\mathcal{P}^m({S}):=\{v\in L^2(S):\quad 
v= \widehat{v}\circ\mathcal{A}^{-1}
 \text{ with }  \widehat{v}|_{\widehat{S}}\in \mathcal{P}^k(\widehat{S})\}.
\]
Note that the standard 
{\it pull-back} transformation 
$\bullet(\mathbf x, t) = \widetilde{\bullet}\circ\mathcal{A}^{-1}(\mathbf x)$
is used to define the space $\mathcal{P}^m(S)$.
We now define the following discontinuous polynomial finite element spaces based on pull-back transformations:
\begin{subequations}
  \label{fes}
  \begin{alignat}{2}
    \label{fes-v}
    \bld V_h^t :=&\;\{\bld v\in [L^2(\Tho)]^d:&&\quad 
 \bld v|_{T}\in [\mathcal{P}^k(T)]^d,\quad \forall T\in\Tho
  % \bld v=\bld v^0\circ\mathcal{A}_t^{-1}, 
%  \text{ with }  \bld v^0|_{T^0}\in [\mathcal{P}^k(T^0)]^d,\quad \forall T^0\in\Th^0
\},\\
    \label{fes-q}
      Q_h^t :=&\;\{q\in L^2(\Tho):&&\quad 
 q|_{T}\in \mathcal{P}^{k-1}(T),\quad \forall  T\in\Tho
%        q=q^0\circ\mathcal{A}_t^{-1}, \text{ with }
 %   q^0|_{T^0}\in \mathcal{P}^{k-1}(T^0),\quad \forall  T^0\in\Th^0
\},\\
    \label{fes-vhat}
        \widehat{\bld V}_h^t :=&\;\{\widehat{\bld v}\in [L^2(\Eho)]^d:&&\quad 
       \widehat{\bld v}=
         \oplus_{j=1}^{d-1}(\widehat{v}_j)\bld t_j, \text{
          with }
         \widehat{v}_j|_{F}\in
    \mathcal{P}^{k}(F), \quad \forall F\in\Eho
%          \widehat{\bld v}=
  %        \oplus_{j=1}^{d-1}(\widehat{v}_j^0\circ\mathcal{A}_t^{-1})\bld t_j, \text{
    %      with }
   %       \widehat{v}_j^0|_{F^0}\in
   % \mathcal{P}^{k}(F^0), \quad \forall F^0\in\Eh^0
\},\\
    \label{fes-vhat0}
          \widehat{\bld V}_{h,0}^t :=&\;\{\widehat{\bld v}\in\widehat{\bld
            V}_h^t  :&&\quad 
  \widehat{\bld v}|_{F} = \bld 0, \quad \forall F\in\Eho\cap\partial \Omega^t 
\},\\
    \label{fes-qhat}
            \widehat{M}_h^t :=&\;\{\widehat{\tau}^{nn} \in L^2(\Eho):&&\quad 
            \widehat{\tau}^{nn}|_{F}\in \mathcal{P}^{k}(F),
  \quad \forall F\in\Eho
%              \widehat{\tau}^{nn}=
 %             \widehat{\tau}^{nn,0}\circ
  %            \mathcal{A}_t^{-1}, \text{ with }
   %           \widehat{ \tau}^{nn,0}|_{F^0}\in \mathcal{P}^{k}(F^0),
 % \quad \forall F^0\in\Eh^0 
\},
\end{alignat}
\end{subequations}
where 
$\{\bld t_j\}_{j=1}^{d-1}$ are the orthogonal tangential directions on the mesh
skeleton $\Eho$. 
The weak formulation of the TVNNS-HDG scheme for \eqref{stokes} is given as follows: 
Find 
%local unknowns
$(\bld u_h, p_h,
\widehat{\bld u}_h, \widehat{\sigma}_h^{nn})\in 
\bld 
V_h^t\times Q_h^t\times 
\widehat{\bld V}_{h,0}^t\times \widehat{M}_h^t$ 
  such that 
\begin{subequations}
  \label{hdg}
  \begin{align}
  \label{hdg1}
 2\mu\,     \mathcal{B}_h\left((\bld u_h,  
\widehat{\bld u}_h), 
    (\bld v_h, 
    \widehat{\bld v}_h)\right) 
    -
    \mathcal{D}_h\left(\bld v_h, 
    (p_h, 
  \widehat{\sigma}^{nn}_h)\right)
    &=\; {f}_h(\bld v_h),\\
  \label{hdg2}
    \mathcal{D}_h\left(\bld u_h, 
    (q_h, 
  \widehat{\tau}^{nn}_h)\right) &=\; 0,
  \end{align}
\end{subequations}
  for all 
$(\bld v_h, q_h,
\widehat{\bld v}_h, \widehat{\tau}_h^{nn})\in 
\bld 
V_h^t\times Q_h^t\times 
\widehat{\bld V}_{h,0}^t\times \widehat{M}_h^t$, 
  where
  \begin{subequations}
    \label{bilinearform} 
  \begin{align}
    \mathcal{B}_h\left((\bld u_h,  
    \widehat{\bld u}_h), (\bld v_h, \widehat{\bld v}_h)\right):=&\;
\sum_{T\in\Tho} \int_T \mathbf{D_x}(\bld u_h):\mathbf{D_x}(\bld
v_h)\;\mathrm{dx}
    - \int_{\partial T}\mathbf{D_x}(\bld u_h)\bld n
    \cdot \mathsf{tang}(\bld v_h-\widehat{\bld v}_h)
    \mathrm{ds}\\
                                                                &\hspace{-19ex}
-\underbrace{\int_{\partial T}\mathbf{D}_{\mathbf x}(\bld v_h)\bld n
    \cdot \mathsf{tang}(\bld u_h-\widehat{\bld u}_h)\mathrm{ds}}_{\text{ for
    symmetry}} 
    +\underbrace{\int_{\partial T}\frac{\alpha (k+1)^2}{h} 
    \mathsf{tang}(\bld u_h-\widehat{\bld u}_h)
    \cdot \mathsf{tang}(\bld v_h-\widehat{\bld v}_h)\mathrm{ds}}_{\text{ for
    stability}},\nonumber\\
    \mathcal{D}_h\left(\bld u_h, (q_h, \widehat{\tau}^{nn}_h)\right):=&\;
     \sum_{T\in\Tho}
     \int_T\mathrm{div}_{\mathbf x} (\bld u_h) q_h \mathrm{dx}
     +\int_{\partial T}
     \bld u_h\cdot\bld n\; \widehat{\tau}_h^{nn} \mathrm{ds},\\
    f_h(\bld v_h) := &\;
   \sum_{T\in\Tho} \int_T \rho\bld f\cdot \bld v_h\;\mathrm{dx},
  \end{align}
  \end{subequations}
  where $\bld n$ is the normal direction,   
$
  \mathsf{tang}(\bld w) := \bld w-(\bld w\cdot\bld n)\bld n
$
denotes the {\it tangential component} of a vector $\bld w$,
 the variable 
 $\widehat{\sigma}_h^{nn}$ approximates the normal-normal component of the
 stress $({\bld \sigma}\bld n)\cdot\bld n$, 
and  $\alpha>0$ is a 
sufficiently large stabilization constant taken to be $4$ in all our
numerical simulations.
%We refer to the new scheme \eqref{hdg} as  a {\it
%tangential-velocity-normal-normal-stress} based HDG scheme
%since the global unknows (after static condensation) consist of tangential component 
%of the
%velocity $\widehat{\bld u}_h$ and the normal-normal component of the 
%stress $\widehat{\sigma}_h^{nn}$ on the mesh skeleton.

The following result shows that 
the scheme \eqref{hdg} on affine simplicial meshes is mathematically equivalent to the divergence-free HDG scheme
proposed in \cite[Equation (2.3.10)]{Lehrenfeld10}.
Hence, optimal (pressure-robust) 
velocity error estimate that is  independent of 
the pressure regularity can be obtained for the scheme \eqref{hdg}, see 
\cite[Lemma 2.3.13--2.3.14]{Lehrenfeld10}.
\begin{theorem}\label{thm:eqv}
Assume the conforming simplicial mesh $\Tho$ is affine.
Let 
$(\bld u_h, p_h,
\widehat{\bld u}_h, \widehat{\sigma}_h^{nn})\in 
\bld 
V_h^t\times Q_h^t\times 
\widehat{\bld V}_{h,0}^t\times \widehat{M}_h^t$ be the solution to the scheme 
\eqref{hdg}. 
Then, 
$(\bld u_h, p_h, \widehat{\bld u}_h)$ is the solution to the following 
divergence-free HDG scheme proposed in 
\cite[Equation (2.3.10)]{Lehrenfeld10}:
Find 
$(\bld u_h, p_h, \widehat{\bld u}_h)\in 
(\bld V_h^t\cap H_0(\mathrm{div},\Omega^t))
\times Q_h^t\times 
\widehat{\bld V}_{h,0}^t$ such that 
  \begin{align*}
 2\mu\,     \mathcal{B}_h\left((\bld u_h,  
\widehat{\bld u}_h), 
    (\bld v_h, 
    \widehat{\bld v}_h)\right) 
    -
     \sum_{T\in\Tho}
     \int_T\mathrm{div}_{\mathbf x} (\bld v_h) p_h \mathrm{dx}
    &=\; {f}_h(\bld v_h),\\
     \sum_{T\in\Tho}
     \int_T\mathrm{div}_{\mathbf x} (\bld u_h) q_h \mathrm{dx}
    &=\; 0,
  \end{align*}
  for all 
$(\bld v_h, q_h, \widehat{\bld v}_h)\in 
(\bld V_h^t\cap H_0(\mathrm{div},\Omega^t))
\times Q_h^t\times 
\widehat{\bld V}_{h,0}^t$. 
\end{theorem}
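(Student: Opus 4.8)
The plan is to recognize the Lehrenfeld scheme as the scheme obtained from \eqref{hdg} by the classical hybridization/static–condensation argument of \cite{ArnoldBrezzi85,Cockburn04}: the extra unknown $\widehat\sigma_h^{nn}$ is precisely the Lagrange multiplier that weakly enforces $H_0(\mathrm{div})$–conformity of $\bld u_h$, and once that conformity is in force the $\widehat\sigma_h^{nn}$–term drops out of the remaining equations. Accordingly I would split the proof into three short steps, working directly from the definitions \eqref{fes} and \eqref{bilinearform}.

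First I would recover the normal continuity of $\bld u_h$. Choosing $q_h=0$ in \eqref{hdg2} gives
\begin{align*}
\sum_{T\in\Tho}\int_{\partial T}\bld u_h\cdot\bld n\;\widehat\tau_h^{nn}\,\mathrm{ds}=0\qquad\text{for all }\widehat\tau_h^{nn}\in\widehat M_h^t,
\end{align*}
which, after reorganizing the element boundaries facet by facet, reads $\sum_{F\in\Eho}\int_F\llbracket\bld u_h\cdot\bld n\rrbracket\,\widehat\tau_h^{nn}\,\mathrm{ds}=0$ (on a boundary facet the jump is the one–sided trace, and the homogeneous space $\widehat{\bld V}_{h,0}^t$ plays no role here since $\widehat M_h^t$ carries no boundary constraint). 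This is the point where affineness enters: because each $T\in\Tho$ is the image of an affine simplex under an affine map, every member of $\bld V_h^t$ is a genuine vector polynomial of degree $k$ on $T$ and the facet normals are piecewise constant, so $\bld u_h\cdot\bld n|_F\in\mathcal P^k(F)$, which is exactly the local space of $\widehat M_h^t$ in \eqref{fes-qhat}. Testing against all of $\mathcal P^k(F)$ thus forces $\llbracket\bld u_h\cdot\bld n\rrbracket=0$ on every interior facet and $\bld u_h\cdot\bld n=0$ on $\partial\Omega^t$, i.e. $\bld u_h\in\bld V_h^t\cap H_0(\mathrm{div},\Omega^t)$.

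Next, taking $\widehat\tau_h^{nn}=0$ in \eqref{hdg2} yields $\sum_{T\in\Tho}\int_T\mathrm{div}_{\mathbf x}(\bld u_h)\,q_h\,\mathrm{dx}=0$ for all $q_h\in Q_h^t$, which is already the second equation of the Lehrenfeld scheme. Finally I would test \eqref{hdg1} only with $\bld v_h\in\bld V_h^t\cap H_0(\mathrm{div},\Omega^t)$ and $\widehat{\bld v}_h\in\widehat{\bld V}_{h,0}^t$; this is a subspace of the original test space, so \eqref{hdg1} still holds for them. For such $\bld v_h$ the facet term in $\mathcal D_h$ vanishes — $\widehat\sigma_h^{nn}$ is single–valued on each facet, the two element contributions on an interior facet cancel because $\bld v_h\cdot\bld n$ is continuous and the outward normals are opposite, and $\bld v_h\cdot\bld n=0$ on $\partial\Omega^t$ — so $\mathcal D_h(\bld v_h,(p_h,\widehat\sigma_h^{nn}))=\sum_{T\in\Tho}\int_T\mathrm{div}_{\mathbf x}(\bld v_h)\,p_h\,\mathrm{dx}$, and \eqref{hdg1} becomes the first equation of the Lehrenfeld scheme. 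Hence $(\bld u_h,p_h,\widehat{\bld u}_h)$ solves it.

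The only genuinely delicate point is the trace argument in the first step, and it is exactly there that affineness is indispensable: on a curved element the pull–back of a polynomial is not a polynomial in general and $\bld v\cdot\bld n$ restricted to a facet need not lie in $\mathcal P^k(F)$, so the pairing against $\widehat M_h^t$ would no longer pin down the full normal trace and exact $H(\mathrm{div})$–conformity — hence the equivalence — would be lost. Everything else is bookkeeping with the definitions. One may add, if desired, that since the test spaces of the Lehrenfeld scheme embed into those of \eqref{hdg} and both problems are well posed, this correspondence is in fact a bijection between solution sets, not merely one inclusion.
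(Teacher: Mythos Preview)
Your proof is correct and follows essentially the same route as the paper: use \eqref{hdg2} (with affineness) to conclude $\bld u_h\in\bld V_h^t\cap H_0(\mathrm{div},\Omega^t)$, then restrict the test functions in \eqref{hdg} to this subspace so that the $\widehat\sigma_h^{nn}$--term drops. The paper's argument is simply terser, while you spell out explicitly why affineness forces $\bld u_h\cdot\bld n|_F\in\mathcal P^k(F)$ and why the facet term in $\mathcal D_h$ vanishes for $H_0(\mathrm{div})$--conforming test functions.
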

\begin{proof}
Since the mesh consists of affine simplicies, equation \eqref{hdg2} implies
  that $\bld u_h$ is divergence-conforming and divergence-free, and 
  its normal component vanishes on the boundary. 
Taking  test functions
$(\bld v_h, q_h, \widehat{\bld v}_h)\in 
(\bld V_h^t\cap H_0(\mathrm{div},\Omega^t))
\times Q_h^t\times 
\widehat{\bld V}_{h,0}^t$ for the scheme \eqref{hdg}, we obtain the above equations
in Theorem \ref{thm:eqv}. This completes the proof.
\end{proof}
\begin{remark}
  \label{rk:extension}
The formulation \eqref{hdg} was briefly mentioned in 
\cite[Remark 7]{LehrenfeldSchoberl16}, but it has not been seriously 
compared with the divergence-conforming velocity space-based formulation
\cite{Lehrenfeld10, LehrenfeldSchoberl16}.
We show in the next subsection
that due to the use of completely discontinuous finite element
space and pull-back mappings for the velocity approximation, the extension of the 
scheme \eqref{hdg} to the ALE Navier-Stokes equations \eqref{ns-eq-ale}
poses no extra difficulties.
%leads to a computationally much more efficiently scheme than that for the divergence-conforming velocity space-based 
%formulation \cite{Neunteufel17}.
\end{remark}
\begin{remark}
  \label{rk:curve}
When the mesh $\Tho$ is curved, equation
\eqref{hdg2} would neither imply divergence-conformity or locally
divergence-free of the velocity approximation $\bld u_h$ due to the use of 
pull-back transformation in \eqref{fes-v} since, in general,  
$\bld u_h\cdot\bld n|_F \not\in 
\mathcal{P}^k(F)$
for curved facets $F$, and 
$\mathrm{div}_{\mathbf x}(\bld u_h)|_T \not\in 
\mathcal{P}^{k-1}(T)$
for curved elements $T$.
The analysis of the scheme \eqref{hdg} for the curved mesh case is more complicated, which  consists our
ongoing work. 
\end{remark}
\subsection{The ALE-TVNNS-HDG scheme}
Here we extend the HDG scheme \eqref{hdg}
to the ALE Navier-Stokes equations \eqref{ns-eq-ale}. 
In particular, we point out that: 
\begin{itemize}
\item The nonlinear 
convection term in \eqref{ns-eq-ale} will be 
discretized using a variant \cite{Lehrenfeld10} 
of the upwinding
technique \cite{CockburnKanschatSchotzau07} thanks to the use of 
discontinuous velocity finite element space, 
which provides a minimal amount of numerical dissipation
that seems to be  strong enough to  suppress unphysical oscillation and 
gives the scheme extra stability properties in 
the convection-dominated case; see, e.g., 
\cite{Schroeder18,Fu19}.
No additional {\it convection stabilization}
is required for the HDG formulation, which has to be contrasted with
methods using continuous velocity approximation, where 
extra stabilization terms \cite{BrooksHughes82} are needed for stability,
usually via a variational multiscale approach \cite{Hughes95}. 
We also mention that traditional
variational multiscale technique might pollute the divergence-free property of
the velocity approximation; see 
\cite[Section 2.2.3]{Kamensky17}, see also
the recent work on an isogeometric divergence-free variational multiscale formulation \cite{vanOpstal17}. 
\item
Thanks to the use of completely
discontinuous velocity finite element space $\bld V_h^t$
via the {pull-back} transformation, 
the time derivative term in \eqref{ns-eq-ale} can be easily treated 
similarly as the fixed domain case.
  \end{itemize}
  \subsubsection{Semidiscrete scheme}
%  To this end, we assume that the mesh $\Tho$ for the 
%  domain $\Omega^t$, for $t\in[0,T]$, is obtained via the ALE mapping
%  \eqref{ale} from a reference conforming simplicial mesh $\Th^0$ for the domain $\Omega_0$, i.e.,
%  \[
%    \Tho:=\{\mathcal{A}_t(T^0): \;\forall T^0\in \Th^0\}
%  .\]
For simplicity, 
we assume the equations \eqref{ns-eq-ale} is equipped with a homogeneous Dirichlet boundary condition for velocity on the whole
boundary $\partial \Omega^t$, for all $t\in[0,T]$.
%Furthermore, we define the following space-time (discontinuous) finite element space
%\begin{align}
%  \label{vh-st}
%  \bld V_h(t):=\{\bld \phi(\mathbf x, t):= 
%    \bld \phi^0\circ(\mathcal{A}_t)^{-1}(\mathbf x):\quad\forall 
%    \bld \phi^0\in \bld V_h^0
%\},
%\end{align}
%which will be used to discretize the velocity approximation.
%Note that for a given fixed $t\in [0,T]$, the space $\bld V_h(t)$ is equivalent to 
%$\bld V_h^t$ on $\Omega^t$.
Then, the semidiscrete ALE-divergence-free-HDG scheme reads as follows:
Find 
$(\bld u_h, p_h,
\widehat{\bld u}_h, \widehat{\sigma}_h^{nn})\in 
\bld 
V_h^t\times Q_h^t\times 
\widehat{\bld V}_{h,0}^t\times \widehat{M}_h^t$ 
  such that 
\begin{subequations}
  \label{ale-hdg}
  \begin{align}
  \label{ale-hdg1}
  \rho\,\mathcal{M}_h(\left.\frac{\partial \bld u_h}{\partial t}\right|_{\mathbf x_0},  \bld v_h)+
\rho\,\mathcal{C}_h\left((\bld \omega,\bld u_h, \widehat{\bld u}_h),  
(\bld v_h,\widehat{\bld v}_h)\right)\hspace{9ex}&\\
+
 2\mu\,     \mathcal{B}_h\left((\bld u_h,  
\widehat{\bld u}_h), 
    (\bld v_h, 
    \widehat{\bld v}_h)\right) 
    -
    \mathcal{D}_h\left(\bld v_h, 
    (p_h, 
  \widehat{\sigma}^{nn}_h)\right)
    &=\; {f}_h(\bld v_h),\nonumber\\
  \label{ale-hdg2}
    \mathcal{D}_h\left(\bld u_h, 
    (q_h, 
  \widehat{\tau}^{nn}_h)\right) &=\; 0,
  \end{align}
\end{subequations}
for all 
$(\bld v_h, q_h,
\widehat{\bld v}_h, \widehat{\tau}_h^{nn})\in 
\bld 
V_h^t\times Q_h^t\times 
\widehat{\bld V}_{h,0}^t\times \widehat{M}_h^t$, 
  where the mass operator
   \[ \mathcal{M}_h\left(\bld u_h, \bld v_h\right):=\;
\sum_{T\in\Tho} \int_T \bld u_h\cdot\bld
v_h\;\mathrm{dx},\]  and the nonlinear convection operator 
$
    \mathcal{C}_h\left((\bld\omega,\bld u_h, \widehat{\bld u}_h), 
  (\bld v_h, \widehat{\bld v}_h)\right)$ is given below:
{
  \begin{align*}
    \mathcal{C}_h
  :=&\;
     \sum_{T\in\Tho}
     -\int_T
     \bld u_h\cdot \mathrm{div}_{\mathbf x}((\bld u_h-\bld \omega)\otimes \bld
     v_h) \mathrm{dx}%\\
                                         %&\hspace{-0ex}
                                         +\int_{\partial T}
     (\bld u_h-\bld \omega)\cdot\bld n
    \, \widehat{\bld u}^{up}_h\cdot \mathsf{tang}(\bld
     v_h-\widehat{\bld v}_h) \mathrm{ds},
                                     \end{align*}}
with $\widehat{\bld u}^{up}_h$ is the following
  {\it upwinding} numerical flux for the tangential velocity component:
  \[
    \widehat{\bld u}^{up}_h:= 
    %(\bld u_h\cdot\bld n)\bld n + 
    \left\{
      \begin{tabular}{ll}
        $\mathsf{tang}(\bld u_h)$ & if $(\bld u_h-\bld \omega)\cdot\bld
        n\ge0$\\[.4ex]
        $\mathsf{tang}(\widehat{\bld u}_h)$ & if $(\bld u_h-\bld \omega)\cdot\bld
        n<0$
    \end{tabular}\right.
  .\]
  The ALE Navier-Stokes equations \eqref{ns-eq-ale} hold the following energy
  identity:
  \[
  \frac12\frac{d}{dt} \rho\|\bld u\|^2_{L^2(\Omega^t)}
  +
  2\mu\,\|\mathbf{D_x}(\bld u)\|^2_{L^2(\Omega^t)}
  =\rho \int_{\Omega^t}\bld f\cdot\bld u\,\mathrm{dx} 
  %- \frac12 \rho\int_{\Omega^t} |\bld
 %   u|^2\mathrm{div}_{\mathbf x} \bld \omega\,\mathrm{dx}
  .\] 
  The following result 
  shows that a similar energy identity holds for the  semidiscrete scheme
  \eqref{ale-hdg} provided that the mesh is affine.  
  %It can be proven using a combination of the standard
  %energy argument and the Reynolds transport theorem, which is 
  %left as an exercise for the reader. 
\begin{theorem}
  \label{thm:ener}
Assume mesh $\Th^t$ is affine.
  Let $(\bld u_h, p_h,
\widehat{\bld u}_h, \widehat{\sigma}_h^{nn})\in 
\bld 
V_h^t\times Q_h^t\times 
\widehat{\bld V}_{h,0}^t\times \widehat{M}_h^t$ 
be the solution to the equations \eqref{ale-hdg} for all $t\in[0,T]$. 
Then the following energy identity holds
\[
  \frac12\frac{d}{dt} \rho\|\bld u_h\|^2_{L^2(\Omega^t)}
  +
 2\mu\, \mathcal{B}_h\left((\bld u_h,  
\widehat{\bld u}_h), 
    (\bld u_h, 
    \widehat{\bld u}_h)\right) 
    + \mathsf{DISP}  = f_h(\bld u_h) 
    %- \frac12\rho \int_{\Omega^t} |\bld
    %u_h|^2\mathrm{div}_{\mathbf x} \bld \omega\,\mathrm{dx}
,\]
where 
$\mathsf{DISP}$ is the following
nonnegative numerical dissipation term:
\[
  \mathsf{DISP}:=\frac12 \rho\sum_{T\in\Tho}
  \int_{\partial T} |(\bld u_h-\bld \omega)\cdot\bld n|\;
  %(
  |\mathsf{tang}(\bld u_h-\widehat{\bld u}_h)|^2
  %+|\mathsf{tang}(\bld u_h^+-\widehat{\bld u}_h)|^2
  %)
  \,\mathrm{ds}.
\] 
\end{theorem}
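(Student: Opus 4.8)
The plan is to test \eqref{ale-hdg} against its own solution and then merge the mass and convection operators by means of the ALE Reynolds transport theorem, using that on an affine mesh $\bld u_h$ is pointwise divergence-free and lies in $H_0(\mathrm{div},\Omega^t)$ --- exactly the structural fact exploited in the proof of Theorem \ref{thm:eqv}. Concretely, first take $(\bld v_h,q_h,\widehat{\bld v}_h,\widehat{\tau}^{nn}_h)=(\bld u_h,p_h,\widehat{\bld u}_h,\widehat{\sigma}^{nn}_h)$. By \eqref{ale-hdg2} with this same choice of test pair, $\mathcal{D}_h(\bld u_h,(p_h,\widehat{\sigma}^{nn}_h))=0$, so the pressure/normal-normal-stress contribution vanishes and \eqref{ale-hdg1} collapses to
\begin{align*}
&\rho\,\mathcal{M}_h\!\left(\left.\tfrac{\partial \bld u_h}{\partial t}\right|_{\mathbf x_0},\bld u_h\right)+\rho\,\mathcal{C}_h\!\left((\bld\omega,\bld u_h,\widehat{\bld u}_h),(\bld u_h,\widehat{\bld u}_h)\right)\\
&\qquad\qquad+2\mu\,\mathcal{B}_h\!\left((\bld u_h,\widehat{\bld u}_h),(\bld u_h,\widehat{\bld u}_h)\right)=f_h(\bld u_h).
\end{align*}
Since the last two terms already appear in the claimed identity, it remains to show that the sum of the mass and convection terms equals $\tfrac12\tfrac{d}{dt}\rho\|\bld u_h\|^2_{L^2(\Omega^t)}+\mathsf{DISP}$.

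Next I would rewrite the mass term using the ALE transport formula element by element: since each $\mathcal{A}_t$ is smooth on $T^0$, one has $\tfrac{d}{dt}\int_T|\bld u_h|^2\,\mathrm{dx}=\int_T\big(2\,\partial_t\bld u_h|_{\mathbf x_0}\!\cdot\bld u_h+|\bld u_h|^2\,\mathrm{div}_{\mathbf x}\bld\omega\big)\mathrm{dx}$, so, summing over $T\in\Tho$,
\[
\rho\,\mathcal{M}_h\!\left(\left.\tfrac{\partial\bld u_h}{\partial t}\right|_{\mathbf x_0},\bld u_h\right)=\tfrac12\rho\,\tfrac{d}{dt}\|\bld u_h\|^2_{L^2(\Omega^t)}-\tfrac{\rho}{2}\sum_{T\in\Tho}\int_T|\bld u_h|^2\,\mathrm{div}_{\mathbf x}\bld\omega\,\mathrm{dx}.
\]
The identity will therefore follow once $\rho\,\mathcal{C}_h((\bld\omega,\bld u_h,\widehat{\bld u}_h),(\bld u_h,\widehat{\bld u}_h))=\tfrac{\rho}{2}\sum_{T}\int_T|\bld u_h|^2\,\mathrm{div}_{\mathbf x}\bld\omega\,\mathrm{dx}+\mathsf{DISP}$. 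This is the geometric-conservation-law heart of the argument: the volume term produced by transporting the moving domain must be reproduced exactly by the discrete convection operator, which works precisely because $\mathrm{div}_{\mathbf x}\bld u_h=0$ on affine elements.

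For $\mathcal{C}_h$ I would integrate the volume term by parts, use $\mathrm{div}_{\mathbf x}(\bld u_h-\bld\omega)=-\mathrm{div}_{\mathbf x}\bld\omega$, and obtain, on each $T$,
\[
-\int_T\bld u_h\cdot\mathrm{div}_{\mathbf x}\big((\bld u_h-\bld\omega)\otimes\bld u_h\big)\mathrm{dx}=\tfrac12\int_T|\bld u_h|^2\,\mathrm{div}_{\mathbf x}\bld\omega\,\mathrm{dx}-\tfrac12\int_{\partial T}|\bld u_h|^2\,(\bld u_h-\bld\omega)\cdot\bld n\,\mathrm{ds}.
\]
The volume part sums to exactly the quantity needed above, so it remains to check that the sum over $T$ of $-\tfrac12\int_{\partial T}|\bld u_h|^2(\bld u_h-\bld\omega)\cdot\bld n\,\mathrm{ds}+\int_{\partial T}(\bld u_h-\bld\omega)\cdot\bld n\,\widehat{\bld u}^{up}_h\cdot\mathsf{tang}(\bld u_h-\widehat{\bld u}_h)\,\mathrm{ds}$ equals $\tfrac1\rho\mathsf{DISP}$. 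I would do this facet by facet. On an interior facet, $\bld u_h\cdot\bld n$, $\bld\omega$ and $\widehat{\bld u}_h$ are single-valued (normal-continuity of $\bld u_h$ on affine meshes; continuity of the ALE velocity; and $\widehat{\bld u}_h\in\widehat{\bld V}_{h,0}^t$ is facet-wise single-valued and tangential), so the two element contributions can be combined; writing $\beta:=(\bld u_h-\bld\omega)\cdot\bld n$, splitting into the cases $\beta\ge0$ and $\beta<0$ to resolve the upwind flux $\widehat{\bld u}^{up}_h$, and using $|\bld u_h^{\pm}|^2=|\mathsf{tang}(\bld u_h^{\pm})|^2+(\bld u_h\cdot\bld n)^2$, one is left with an elementary quadratic identity that returns $\tfrac12|\beta|\,|\mathsf{tang}(\bld u_h-\widehat{\bld u}_h)|^2$ on each of the two adjacent elements. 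On a boundary facet, $\widehat{\bld u}_h=\bld 0$ and $\bld u_h\cdot\bld n=0$ (since $\bld u_h\in H_0(\mathrm{div},\Omega^t)$, again from \eqref{ale-hdg2} on affine meshes), and the same two-case check gives $\tfrac12|\beta|\,|\mathsf{tang}(\bld u_h)|^2$. Summing over all facets yields $\mathsf{DISP}$, which is manifestly nonnegative, and assembling the three displays gives the asserted energy identity.

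I expect the main obstacle to be the facet bookkeeping of the last step --- correctly pairing the one-sided traces, tracking the sign of $\bld n$, and verifying that the upwind flux makes the quadratic terms close up to $\tfrac12|\beta|\,|\mathsf{tang}(\bld u_h-\widehat{\bld u}_h)|^2$ in every sign case, including on boundary facets. Conceptually, however, the real content is the middle step: that the geometric term $\tfrac12\int|\bld u_h|^2\,\mathrm{div}_{\mathbf x}\bld\omega$ coming from the transport theorem is exactly the term produced by integrating the ALE convection operator by parts once $\mathrm{div}_{\mathbf x}\bld u_h=0$ is invoked. This is where the affine-mesh hypothesis is essential --- on a non-affine mesh $\mathrm{div}_{\mathbf x}\bld u_h\ne0$ in general (cf.\ Remark \ref{rk:curve}), and a geometric-conservation-law consistency error would appear.
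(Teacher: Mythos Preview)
Your proposal is correct and follows exactly the paper's strategy: test \eqref{ale-hdg} with the solution, apply the Reynolds transport theorem to the mass term, and simplify $\mathcal{C}_h$ using that $\bld u_h$ is divergence-free and $H_0(\mathrm{div})$-conforming on an affine mesh, so that the $\frac12\int|\bld u_h|^2\,\mathrm{div}_{\mathbf x}\bld\omega$ terms cancel and only $\mathsf{DISP}$ survives. The paper's proof merely asserts the simplification of $\mathcal{C}_h$; your integration-by-parts and facet-by-facet upwind bookkeeping are precisely the details it omits.
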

\begin{proof}
  Taking the test functions  
$(\bld v_h, q_h,
\widehat{\bld v}_h, \widehat{\tau}_h^{nn})
:= (\bld u_h, p_h,
\widehat{\bld u}_h, \widehat{\sigma}_h^{nn})
$ and adding, we get 
\[
  \rho\,\mathcal{M}_h(\left.\frac{\partial \bld u_h}{\partial
    t}\right|_{\mathbf x_0},  \bld u_h)+
\rho\,\mathcal{C}_h\left((\bld \omega,\bld u_h, \widehat{\bld u}_h),  
(\bld u_h,\widehat{\bld u}_h)\right)
+
 2\mu\,     \mathcal{B}_h\left((\bld u_h,  
\widehat{\bld u}_h), 
    (\bld u_h, 
    \widehat{\bld u}_h)\right) 
    =\; {f}_h(\bld v_h).
\] 
 Using the Reynolds transport theorem, we have 
 \[
  \mathcal{M}_h(\left.\frac{\partial \bld u_h}{\partial
    t}\right|_{\mathbf x_0},  \bld u_h)
= 
  \frac12\frac{d}{dt} \|\bld u_h\|^2_{L^2(\Omega^t)}
  -\frac12\int_{\Omega^t} |\bld
    u_h|^2\mathrm{div}_{\mathbf x} \bld \omega\,\mathrm{dx}.
   \] 
   By equations \eqref{ale-hdg2}, we have 
   $\bld u_h$ is divergence-conforming and divergence-free.
   Hence, the nonlinear term 
$\mathcal{C}_h\left((\bld \omega,\bld u_h, \widehat{\bld u}_h),  
(\bld u_h,\widehat{\bld u}_h)\right)$ can be simplified as 
\[
  \frac12 \sum_{T\in\Tho}
  \int_{\partial T} |(\bld u_h-\bld \omega)\cdot\bld n|\;
  |\mathsf{tang}(\bld u_h-\widehat{\bld u}_h)|^2
  \,\mathrm{ds}
  +\frac12\int_{\Omega^t} |\bld
    u_h|^2\mathrm{div}_{\mathbf x} \bld \omega\,\mathrm{dx}.
\]
Combining the above two expressions, we arrive at the desired energy 
identity in  Theorem \eqref{thm:ener}.
\end{proof}
  \subsubsection{Fully implicit scheme}
Let  $\{\bld \phi_i^t:=\bld\phi^0_i \circ\mathcal{A}_t^{-1}
  \}_{i=1}^{N}$ be a set of 
  basis functions for $\bld V_h^t$ where 
  $\{\bld \phi_i^0\}_{i=1}^{N}$ is a set of basis functions 
  for $\bld V_h^0$. We observe that the following time derivative of
  the basis functions vanishes:
  \[
  \left. \frac{\partial{\bld \phi_i^t}}{\partial t}\right|_{\mathbf x_0}
    = \left. \frac{\partial{\bld \phi_i^0}}{\partial t}\right|_{\mathbf x_0}
      \circ\mathcal{A}_t^{-1} = 0.
  \]
Now,  denote  $\{t^n:=n\,\delta t\}_{n=0}^M$ be the uniform partition of the time
  interval $[0,T]$, with time step $\delta t$ and $T=M\delta t$.
  Let 
  \[
  \bld u_h^m : =\sum_{i=1}^N \bld \phi_i^{t^m}(\mathbf x) u_i^m\in \bld
V_h^{t^m}\]
    be the velocity approximation at time $t^m$. 
The {\it pull-back} of $\bld u_h^m$  to any other configuration $\Omega_s$ with 
is given by 
\[
\bld u_h^m\circ\mathcal{A}_s\circ(\mathcal{A}_{t^m})^{-1}
=\sum_{i=1}^N \bld \phi_i^{s}(\mathbf x) u_i^m\in \bld V_h^s,
\]
which, to lighten notation, we still denote as $\bld u_h^m$.
We use backward difference formula (BDF) to discretize the time derivative term
in \eqref{ale-hdg1}. So we introduce the backward discretization operators
(up to third order accuracy)
applied to the function $\bld u_h^m$:
\begin{alignat*}{2}
  D_t^1\bld u_h^m : =& \frac{1}{\delta t} (\bld u_h^m-\bld u_h^{m-1}),\quad && \text{
  BDF1},\\
  D_t^2\bld u_h^m : =& \frac{1}{\delta t} (\frac32\bld u_h^m-2\bld u_h^{m-1}
  +\frac12\bld u_h^{m-2}),\quad && \text{
  BDF2},\\
  D_t^3\bld u_h^m : =& \frac{1}{\delta t} (\frac{11}{6}\bld u_h^m-3\bld u_h^{m-1}
  +\frac32\bld u_h^{m-2}-\frac13\bld u_h^{m-3}),\quad && \text{
  BDF3}.
\end{alignat*}
Finally, the fully discrete scheme with BDF time discretization reads as follows:
Given $s\in\{1,2,3\}$ and $\bld u_h^j$ for $j\le s-1$, find, for $m\in\{s, \cdots, M\}$,  
$(\bld u_h^m, p_h^m,
\widehat{\bld u}_h^m, \widehat{\sigma}_h^{nn,m})\in 
\bld 
V_h^{t_m}\times Q_h^{t_m}\times 
\widehat{\bld V}_{h,0}^{t_m}\times \widehat{M}_h^{t_m}$ 
  such that 
\begin{subequations}
  \label{ale-hdg-full}
  \begin{align}
  \label{ale-hdg-full1}
  \rho\,\mathcal{M}_h(D_t^s \bld u_h^m,  \bld v_h)+
\rho\,\mathcal{C}_h\left((\bld \omega,\bld u_h^m, \widehat{\bld u}_h^m),  
(\bld v_h,\widehat{\bld v}_h)\right)\hspace{9ex}&\\
+
 2\mu\,     \mathcal{B}_h\left((\bld u_h^m,  
\widehat{\bld u}_h^m), 
    (\bld v_h, 
    \widehat{\bld v}_h)\right) 
    -
    \mathcal{D}_h\left(\bld v_h, 
    (p_h^m, 
  \widehat{\sigma}^{nn,m}_h)\right)
    &=\; {f}_h(\bld v_h),\nonumber\\
  \label{ale-hdg-full2}
    \mathcal{D}_h\left(\bld u_h^m, 
    (q_h, 
  \widehat{\tau}^{nn}_h)\right) &=\; 0,
  \end{align}
\end{subequations}
for all 
$(\bld v_h, q_h,
\widehat{\bld v}_h, \widehat{\tau}_h^{nn})\in 
\bld 
V_h^{t_m}\times Q_h^{t_m}\times 
\widehat{\bld V}_{h,0}^{t_m}\times \widehat{M}_h^{t_m}$.
We note that the fully discrete scheme results in a system of nonlinear
equations due to the implicit treatment of the nonlinear convection term.
It has been shown in \cite{Nobile01} that a fully discrete scheme is only 
{\it conditionally stable} even if it is based on BDF1 time discretization, 
with maximum allowable time step depends on the speed of domain deformation.
Hence, it makes sense to treat the (nonlinear) convection term
in \eqref{ns-eq-ale} explicitly to yield a less computational intensive,
{\it conditionally stable} linear scheme as we show next.
\subsubsection{Implicit-Explicit scheme} 
Here we treat the nonlinear convection term explicitly with a standard upwinding DG
formulation:
  \begin{align*}
    \mathcal{C}_h^{dg}(\bld w, \bld u_h, \bld v_h)
  :=&\;
     \sum_{T\in\Tho}
     -\int_T
     \bld u_h\cdot \mathrm{div}_{\mathbf x}((\bld u_h-\bld \omega)\otimes \bld
     v_h) \mathrm{dx}%\\
                                         %&\hspace{-0ex}
                                         +\int_{\partial T}
     (\bld u_h-\bld \omega)\cdot\bld n
     \, \mathsf{tang}({\bld u}^{-}_h)\cdot \mathsf{tang}(\bld
     v_h) \mathrm{ds},
\end{align*}
where ${\bld u}^{-}_h|_F = (\bld u_h|_{T^-})|_F$ 
is the {\it upwinding} numerical flux on the facet $F$ with $T^-$ the element
sharing the facet $F$ such that $(\bld u_h-\bld \omega)\cdot\bld n_{T^-}|_F >0$.
The implicit-explicit (IMEX) scheme with IMEX-SBDF 
\cite{AscherRuuthWetton93} temporal discretization reads as follows:
Given $s\in\{1,2,3\}$ and $\bld u_h^j$ for $j\le s-1$, find, for $m\in\{s, \cdots, M\}$,  
$(\bld u_h^m, p_h^m,
\widehat{\bld u}_h^m, \widehat{\sigma}_h^{nn,m})\in 
\bld 
V_h^{t_m}\times Q_h^{t_m}\times 
\widehat{\bld V}_{h,0}^{t_m}\times \widehat{M}_h^{t_m}$ 
  such that 
\begin{subequations}
  \label{ale-hdg-imex}
  \begin{align}
  \label{ale-hdg-imex1}
  \rho\,\mathcal{M}_h(D_t^s \bld u_h^m,  \bld v_h)+
  \rho\,\mathcal{C}_h^{dg}\left(\bld \omega,\widetilde{\bld u}_h^{m,s},\bld
  v_h\right)\hspace{16ex}&\\
+
 2\mu\,     \mathcal{B}_h\left((\bld u_h^m,  
\widehat{\bld u}_h^m), 
    (\bld v_h, 
    \widehat{\bld v}_h)\right) 
    -
    \mathcal{D}_h\left(\bld v_h, 
    (p_h^m, 
  \widehat{\sigma}^{nn,m}_h)\right)
    &=\; {f}_h(\bld v_h),\nonumber\\
  \label{ale-hdg-imex2}
    \mathcal{D}_h\left(\bld u_h^m, 
    (q_h, 
  \widehat{\tau}^{nn}_h)\right) &=\; 0,
  \end{align}
\end{subequations}
for all 
$(\bld v_h, q_h,
\widehat{\bld v}_h, \widehat{\tau}_h^{nn})\in 
\bld 
V_h^{t_m}\times Q_h^{t_m}\times 
\widehat{\bld V}_{h,0}^{t_m}\times \widehat{M}_h^{t_m}$, where 
\begin{alignat*}{2}
  \widetilde{\bld u}_h^{m,s} : =
  \left\{
    \begin{tabular}{ll}
      $\bld u_h^{m-1}$, & {for $s=1$},\\[.4ex]
      $2\bld u_h^{m-1}-\bld u_h^{m-2}$, & {for $s=2$},\\[.4ex]
  $3\bld u_h^{m-1}-3\bld u_h^{m-2}+\bld u_h^{m-3}$,\quad & {for $s=3$}.
    \end{tabular}
\right.
  \end{alignat*}
  The IMEX scheme \eqref{ale-hdg-imex} is linear and the resulting global
  linear system after static condensation is a symmetric Stokes-like system with global
  unknowns consists of tangential velocity and normal-normal stress on the mesh
  skeleton.
\begin{remark}
    When the flow problem \eqref{ns-eq-ale} is convection-dominated and the mesh is 
under-resolved
for the viscous term (which is typical the case for high Reynolds number flows), 
it further make sense to treat the second-order viscous term in
\eqref{ns-eq-ale} explicitly with a DG formulation \cite{CockburnKanschatSchotzau07} 
leaving only the velocity-pressure coupling implicit, see our recent studies \cite{Fu19,Fu19b} on
static meshes.  
\end{remark}

\section{The ALE-TVNNS-HDG scheme for two-phase flow}
\label{sec:twophase}
In this section, we extend the ALE-TVNNS-HDG scheme proposed in
Section \ref{sec:ale} to incompressible two-phase flow with
surface tension.
\begin{figure}[h!]
\centering
\includegraphics[width=.3\textwidth]{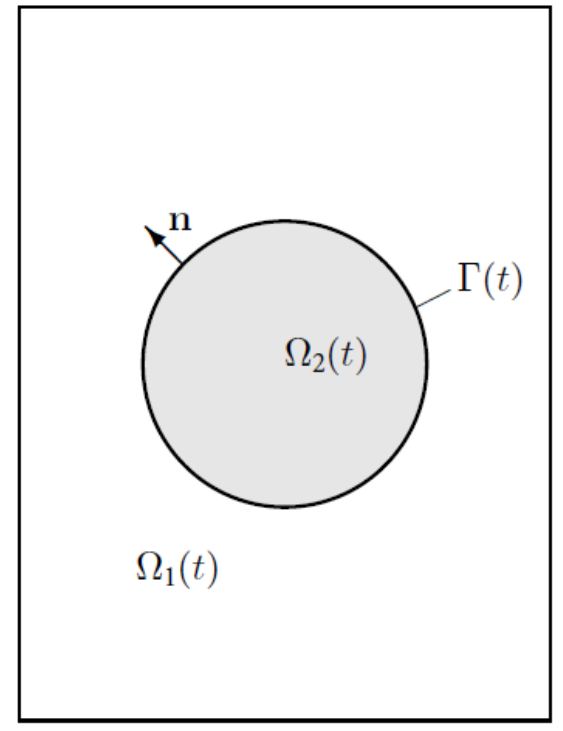}
  \caption{Schematic view of the two-phase domain.}
\label{fig:tp}
\end{figure} 
Assume a geometrical setting as sketched in 
Fig.~\ref{fig:tp}: denote by $\Omega$ a fixed domain such that 
$\overline{\Omega}=\overline{\Omega}_1(t)\cup\overline{\Omega}_2(t)$, 
where $\Omega_1(t)$, $\Omega_2(t)$ denote two domains separated by
a sharp smooth interface 
$\Gamma(t):=\overline{\Omega}_1(t)\cap\overline{\Omega}_2(t)$
with normal vector $\bld n$ pointing from 
$\Omega_2(t)$ into $\Omega_1(t)$.
Assume the domain $\Omega_i$ occupies an incompressible fluid with densitiy 
$\rho_i$ and dynamic viscosity $\mu_i$ for $i=1,2$.
Then, the {\it immiscible} incompressible two-phase  flow equations with
surface tension reads as follows:
Given an initial interface $\Gamma_0\subset \Omega$ and corresponding
sub-domains $\Omega_i(0)$, initial velocity $\bld u_0$ and 
volume force $\bld f$, find a velocity $\bld u(\mathbf{x}, t)$, 
pressure $p(\mathbf{x}, t)$, and interface $\Gamma(t)$ such that
$\bld u(\cdot, 0) = \bld u_0$, $\Gamma(0) = \Gamma_0$, and for all 
$t\in [0,T]$ the following equations hold,
\begin{subequations}
  \label{two-phase-eq}
  \begin{alignat}{2}
\label{two-eq-ale1}
\rho_i(\frac{\partial \bld u}{\partial t}
+\bld u\cdot\nabla_{\mathbf x}\bld u) 
  -\mathrm{div}_{\mathbf x}\bld \sigma_i
  %(2\rho_i\mu\mathbf{D_x}(\bld u)  -p\bld {I})
  =&\; \rho_i\bld f, \quad&& \text{in} \;\Omega_i, \quad
  i=1,2\\
\label{two-eq-ale2}
    \mathrm{div}_{\mathbf x}\bld u = &\;0,\quad     && \text{in}
    \;\Omega_i, \quad i=1,2,\\
\label{two-eq-ale3}
    \bld u = &\;0,\quad     && \text{on}
    \;\partial\Omega,\\
\label{two-eq-ale4}
    [\bld \sigma\bld n] = -\tau \kappa\bld n, 
                                     &\quad [\bld u] = 0, \quad &&\text{on}
                                     \;\Gamma,\\
\label{two-eq-ale5}
V_{\Gamma} =&
\;\bld u\cdot\bld n, \quad &&\text{on}\;
                                     \Gamma,
\end{alignat}
\end{subequations}
where
 ${\bld \sigma}_i:= 
    2\mu_i\mathbf{D}_{\mathbf
 x}(\bld u)  -p\bld {I}$ is the stress on domain $\Omega_i$,
 $\tau$ is the {\it surface tension coefficient} assumed to be
constant, the scalar function 
\[\kappa(\mathbf{x}):=\mathrm{div}_{\mathbf x}\bld n(\mathbf{x}), 
\quad \mathbf{x}\in\Gamma\]
is the  mean curvature, and $V_{\Gamma}=V_{\Gamma}(\mathbf x,
t)\in\mathbb{R}$ denotes the {\it size} of the
velocity of the interface $\Gamma$ at $\mathbf{x}\in\Gamma(t)$.

We solve the moving interface problem \eqref{two-phase-eq} on ALE moving interface-fitted meshes.
In particular, we combine the IMEX-ALE-TVNNS-HDG scheme \eqref{ale-hdg-imex} with a semi-implicit treatment \cite{Fernandez07} of  the domain motion where the interface is updated explicitly, which results in a {\it linear} scheme. 
We use the Laplace-Beltrami technique %\cite{XXX}
to approximate the mean curvature vector $\kappa\bld n$ on the discrete interface. Details of the fully discrete algorithm using second-order IMEX-SBDF time stepping  ($s=2$) is given below.

We start with an interface-fitted conforming simplicial mesh $\Th^0=\Th^{1,0}\cup \Th^{2,0}$ of the domain $\Omega$ where $\Th^{i,0}$ is a triangulation of $\Omega_i(0)$ for $i=1,2$, and $\Gamma_h^0:=\overline{\Th^{1,0}}\cap\overline{\Th^{2,0}}$ is a discretization of the interface $\Gamma_0$. 
Again, we denote  $\{t^n:=n\,\delta t\}_{n=0}^M$ be the uniform partition of the time
  interval $[0,T]$ with time step $\delta t$ and $T=M\delta t$. 
Given $\bld u_h^j\in\bld V_h^j$ as the velocity approximation at time $t^j$ with $j=0,1$, 
and let $\Th^1:=\mathcal{A}_h^1(\Th^0)$ be the mapped mesh at time $t^1$ with 
$\Gamma_h^1:=\mathcal{A}_h^1(\Gamma_h^0)$ the interface approximation at $t^1$.
Then,  for $m\in\{2,\cdots, M\}$ we perform the following four steps to update the computational mesh and field unknows at time $t^m$:
%\hfill\begin{minipage}{\dimexpr.95\textwidth}
  \begin{itemize}
\item   \noindent\underline{\sf Step 1:} Extrapolate the 
interface  $\Gamma_h^m$  with a second-order 
Adams-Bashforth method:
\[
\Gamma_h^m(\mathbf {x}^{m-1}) =
  \mathbf {x}^{m-1} +
  \delta t \bld \omega_\Gamma^{m-1/2},\quad \forall \mathbf{x}^{m-1}\in \Gamma_h^{m-1},
\]
where $\bld \omega_\Gamma^{m-1/2}$ is the mesh velocity 
on the interface that satisfies
\begin{align}
  \label{interface-vel}
\bld \omega_\Gamma^{m-1/2}\cdot\bld n|_{\Gamma_h^{m-1}} = (1.5\bld u^{m-1}-0.5\bld u^{m-2})\cdot\bld n|_{\Gamma_h^{m-1}}.
\end{align}
Note that  the tangential component of mesh velocity $\bld \omega_\Gamma^{m-1/2}$ can be {\it freely} chosen as it does not affect the shape of the interface.
%The advantage of this approach with the traditional (Lagrange) approach
%$\bld \omega_\Gamma= 1.5\bld u^{m-1}-0.5\bld u^{m-2}$ for the rising bubble benchmark test 
 \item\noindent  \underline{\sf Step 2:} Define the new mesh 
$\Th^{{m}}=\mathcal{A}_{h}^m(\Th^{m-1})$ by an extension operator:
\[
\mathcal{A}_{h}^m(\mathbf x^{m-1}) = \mathbf x^{m-1} +
      \mathsf{Ext}(\delta t\bld\omega_\Gamma^{m-1/2}),
\]
where $\mathsf{Ext}$ is a proper extension operator from the interface to 
the interior domain.
%which is taken to be 
%a psedoelasticity operator %\cite{Nobile01} 
%in the current  configuration $\Th^{m-1}$ for our numerical simulation.
 \item\noindent\underline{\sf Step 3:} Update the computation mesh $\Th^{m}$ and the interface $\Gamma_h^m$ and compute the approximatio to the mean curvature vector $\kappa\bld n|_{\Gamma_h^m}=-
\triangle_\Gamma(\mathbf x^m)|_{\Gamma_h^m}$ by the Laplace-Beltrami technique:
Find $\bld \kappa^m\in [V_{\Gamma_h^m}^k]^d
%H^1(\Gamma_h^m)
$ such that 
\begin{align}
  \label{curvature}
(\bld \kappa^m, \bld \psi)_{\Gamma_h^m} =(\nabla_\Gamma(\mathbf x^m), 
\nabla_{\Gamma}\bld \psi)|_{\Gamma_h^m},\quad\forall
\bld\psi\in[\mathcal{V}_{\Gamma_h^m}^k]^d,
\end{align}
where 
\begin{align}
  \label{vint}
  \mathcal{V}_{\Gamma_h^m}^k:=
\{\psi \in H^1(\Gamma_h^m):\quad 
              \psi|_{F}\in \mathcal{P}^{k}(F),
  \quad \forall F\in\Gamma_h^m 
\}.
\end{align}
Here $\nabla_\Gamma(\bullet) := \nabla(\bullet) -
(\nabla(\bullet)\cdot\bld n)\bld
n$ is the surface gradient, and $\triangle_\Gamma$ is the surface
Laplacian.
We also compute the mesh velocity $\bld \omega^m$ at time $t^m$ by BDF2:
\[
\bld \omega^m:= \frac{1}{\delta t}(1.5\mathcal{A}_h^m-2\mathcal{A}_h^{m-1}+
0.5\mathcal{A}_h^{m-2}).
\]
\item\noindent\underline{\sf Step 4:}
Update field unknowns at $t^m$ by the IMEX-ALE-TVNNS-HDG scheme \eqref{ale-hdg-imex} (with $s=2$), where the density $\rho$ and viscosity $\mu$ in equation \eqref{ale-hdg-imex1} are understood to be piecewise constant functions that need 
to be placed inside the operators, and the right hand side of \eqref{ale-hdg-imex1} has 
the following additional term due to surface tension:
\[
f_{\Gamma_h}(\bld\kappa_h^m, \bld v_h) := \sum_{F\in\Gamma_h^m}\int_F\tau\bld\kappa_h^m\cdot\bld v_h^-\,\mathrm{ds},
\] 
where $\bld v_h^-|_F = ((\bld v_h)|_{K^-})|_F$ with 
$K^-$ being the element sharing the facet $F$ that belongs to the interior subdomain $\Omega_2$.
For completeness, we present the resulting fully discrete formulation below:
Find  $(\bld u_h^m, p_h^m,
\widehat{\bld u}_h^m, \widehat{\sigma}_h^{nn,m})\in 
\bld 
V_h^{t_m}\times Q_h^{t_m}\times 
\widehat{\bld V}_{h,0}^{t_m}\times \widehat{M}_h^{t_m}$ 
  such that 
\begin{subequations}
  \label{ale-hdg-imex-tp}
  \begin{align}
  \label{ale-hdg-imex-tp1}
 \mathcal{M}_h(D_t^2 \bld u_h^m,   \rho\,\bld v_h)+
\mathcal{C}_h^{dg}\left(\bld \omega^m,\widetilde{\bld u}_h^{m,2}, \rho\,\bld
  v_h\right)\hspace{16ex}&\\
+
  \mathcal{B}_h\left((\bld u_h^m,  
\widehat{\bld u}_h^m), 
    ( 2\mu\,\bld v_h, 
    2\mu\, \widehat{\bld v}_h)\right) 
    -
    \mathcal{D}_h\left(\bld v_h, 
    (p_h^m, 
  \widehat{\sigma}^{nn,m}_h)\right)
    &=\; {f}_h(\bld v_h)+f_{\Gamma_h}(\bld \kappa_h^m,\bld v_h),\nonumber\\
  \label{ale-hdg-imex-tp2}
    \mathcal{D}_h\left(\bld u_h^m, 
    (q_h, 
  \widehat{\tau}^{nn}_h)\right) &=\; 0,
  \end{align}
\end{subequations}
for all 
$(\bld v_h, q_h,
\widehat{\bld v}_h, \widehat{\tau}_h^{nn})\in 
\bld 
V_h^{t_m}\times Q_h^{t_m}\times 
\widehat{\bld V}_{h,0}^{t_m}\times \widehat{M}_h^{t_m}$.
%, where the domain 
%velocity $\bld \omega$ is approximated by 
%\[
%  \bld\omega =\frac{ \mathcal{A}_{t^{n+1}}-\mathcal{A}_{t^{n}}}{\delta t}\circ
%  \mathcal{A}_{t^{n+1}}^{-1}(\mathbf x).
%\]
%\end{minipage}
\end{itemize}

\newcommand{\vgamma}{\mathcal{V}_{\Gamma_h^{m-1}}^k}
\newcommand{\vale}{\mathcal{V}_{h}^{m-1, k}}

\begin{remark}[Step 1: Interface node distribution]
  \label{rk:step1}
  The choice of $\bld \omega_\Gamma^{m-1/2}$ strongly affect the node distributions 
on the interface.
Traditionally,  $\bld \omega_\Gamma^{m-1/2}$ is chosen to be, either the full fluid
velocity 
\begin{align*}
  \bld \omega_{\Gamma}^{m-1/2} = 1.5\bld u^{m-1}-0.5\bld u^{m-2}
,\end{align*}
or the normal component of the fluid velocity 
\begin{align}
  \label{omega-u}
  \bld \omega_{\Gamma}^{m-1/2} = ((1.5\bld u^{m-1}-0.5\bld u^{m-2})\cdot\bld
n)\bld n.
\end{align}
However, both approaches suffer from the general problem that there is no
control of the node distribution and of the shape of surface elements.
We follow the work \cite{Barrett07}, see also \cite{Ganesan17},
to update the mesh velocity that combines the equation \eqref{interface-vel} 
with the Laplace-Beltrami identity 
$-\triangle_{\Gamma}\mathsf{id} = \kappa\bld
n$ for the curvature
 in order to get more control over the node
distribution.
In particular, we find
$(\bld \omega_{\Gamma}^{m-1/2}, \kappa_h)\in [\vgamma]^d\times\vgamma$
such that
\begin{subequations}
  \label{omega-eq}
\begin{align}
  \langle \bld \omega_{\Gamma}^{m-1/2}\cdot\bld n,
\phi
  \rangle_{\Gamma_h^{m-1}} = &\;
  \langle (1.5\bld u^{m-1}-0.5\bld u^{m-2})\cdot\bld n,
\phi
  \rangle_{\Gamma_h^{m-1}},\\
  \langle\kappa,\bld\psi\cdot\bld n 
  \rangle_{\Gamma_h^{m-1}}
  - \delta t 
  \langle\nabla_{\Gamma}\bld \omega_{\Gamma}^{m-1/2}, 
  \nabla_{\Gamma}\bld \psi
  \rangle_{\Gamma_h^{m-1}}
  = &\;
 \langle\nabla_{\Gamma}\mathbf x^{m-1}, 
  \nabla_{\Gamma}\bld \psi
  \rangle_{\Gamma_h^{m-1}},
\end{align}
\end{subequations}
for all
$(\bld \psi, \phi)\in [\vgamma]^d\times\vgamma$.
Numerical comparion of the approaches \eqref{omega-u} and \eqref{omega-eq}
will be presented in Section \ref{sec:num} for the rising bubble benchmark problem \cite{Hysing09}, which show the superior performance of approach \eqref{omega-eq}
in terms of interface node distribution.
\end{remark}

\begin{remark}[Step 2: Construction of the ALE map]
  \label{rk:step2}
%  The extension operator $\mathsf{Ext}$ controls quality of the 
%  mapped mesh. 
Various procedures were introduced in the literature to extend interface
deformation to the interior domain, see, e.g., 
\cite{Yirgit08, Wick11,Persson15, Basting17}.
In this paper, we construct the ALE map
$\mathcal{A}_h^m\in [\vale]^d$
using harmonic extension with
stiffening \cite{Richter17} on the updated mesh $\Th^{m-1}$ as follows:
Find 
$\mathcal{A}_h^m\in [\vale]^d$ with 
$\mathcal{A}_h^m|_{\Gamma_h^{m-1}}=
\mathbf x^{m-1}+\delta
t\bld\omega_{\Gamma}^{m-1/2}$
such that 
\begin{align*}
  (\alpha \nabla \mathcal{A}_h^m, \nabla \phi)_{\Th^{m-1}} = 0,
  \quad \forall \bld \phi\in [\vale]^d \text{ with } \bld
  \phi|_{\Gamma_h^{m-1}} = 0,
\end{align*}
where the stiffening coefficient $\alpha: \Th^{m-1}\rightarrow \mathbb{R}$ 
takes maximum value $10$ on the interface and decreases rapidly to $1$
away from the interface based on the distance of the point to the nearest interface point, and 
the space 
\[
  \vale:=
    \{\psi \in H_0^1(\Th^{m-1}):\quad 
              \psi|_{T}\in \mathcal{P}^{k}(T),
              \quad \forall T\in\Th^{m-1}
\}.
\]
\end{remark}

\begin{remark}[Step 3: Mean curvature vector approximation]
  \label{rk:step3}
  We use an $L^2$-project \eqref{curvature} with isoparametric finite
  elements of degree $k$ to approximate the mean curvature vector. 
  It was shown in \cite{Heine04} that the order of convergence in 
  the $L^2$-norm for the approximation is only $k-1$.
  This will negatively impact  the accuracy of the 
  velocity approximation in Step 4, which in turn negatively 
  affects the accuracy of
  the interface approximation in Step 1.
The construction of more accurate mean curvature vector approximation
consists of our ongoing work.
We mention that mean curvature vector 
approximation with first order $L^2$-convergence can be obtained 
for linear finite
elements with stabilization on piecewise linear surface \cite{Hansbo15}.
\end{remark}

\begin{remark}[Step 4: Consistency]
  \label{rk:step4}
We show that  the equation \eqref{ale-hdg-imex-tp1} is a consistent discretization of
  the momentum equation \eqref{two-eq-ale1} with 
  the interface condition \eqref{two-eq-ale4}.
  Let $\bld u(\mathbf x, t)$ and $p(\mathbf x, t)$ be the smooth solution 
  to the equations \eqref{two-phase-eq}. 
  For simplicity, we only consider the semi-discrete case (continuous in
  time).
  Assume 
  \[\widehat{\sigma}^{nn}|_F:= \left\{
      \begin{tabular}{ll}
        $ (\bld \sigma\bld n)\cdot\bld n$ & if $F\not\in \Gamma_h^m$,\\[2ex]
        $ (\bld \sigma_1\bld n)\cdot\bld n$ & if $F\in \Gamma_h^m$,
\end{tabular}\right.
  \]
  Then, by intergration by parts, we have 
  \begin{align*}
&\; 
\mathcal{M}_h(\left.\frac{\partial\bld u}{\partial t}\right|_{\mathbf x_0}, \rho\,\bld
v_h)+
\mathcal{C}_h^{dg}\left(\bld \omega,{\bld u}, \rho\,\bld
v_h\right)+
  \mathcal{B}_h\left((\bld u,  
{\bld u}), 
    ( 2\mu\,\bld v_h, 
    2\mu\, \widehat{\bld v}_h)\right) 
    -
    \mathcal{D}_h\left(\bld v_h, 
    (p, 
  \widehat{\sigma}^{nn})\right)
\\
&
=\;\left(
\rho(\left.\frac{\partial\bld u}{\partial t}\right|_{\mathbf x_0}
+(\bld u-\bld \omega)\cdot\nabla\bld u)
-\mathrm{div}_{\mathbf x}\bld \sigma,\bld
v_h
\right)_{\Th^m}
-\langle(\bld \sigma_2-\bld\sigma_1)\bld n_2,\bld v_h^-
\rangle_{\Gamma_h^m}\\
&=\;
(\rho\bld f,\bld
v_h)_{\Th^m}
+\langle\tau\kappa\bld n_2,\bld v_h^-
\rangle_{\Gamma_h^m}\\
&=\;
f_h(\bld v_h)
+
f_{\Gamma_h}(\kappa\bld n_2, \bld v_h),
\end{align*}
where $\bld n_2$ is the normal vector pointing from $\Omega_2$ to $\Omega_1$.
Hence, the spatial discretization in equation \eqref{ale-hdg-imex-tp1} is
consistent with the equations \eqref{two-eq-ale1}--\eqref{two-eq-ale4}.
\end{remark}

\section{Numerical results}
\label{sec:num}
Here we show three numerical studies for the  
IMEX-ALE-TVNNS-HDG schemes \eqref{ale-hdg-imex} and 
\eqref{ale-hdg-imex-tp}. 
The first example focus on the accuracy study
of the proposed method  with a prescribed smooth 
ALE map.  
We observe optimal convergence.
The second example deals with the application of our method to
a free boundary problem, where the ALE map is not a priori given.
The last example solves the classical rising bubble benchmark problem 
\cite{Hysing09}
in two-phase flow.
Our numerical simulations are performed using the open-source finite-element software 
{\sf NGSolve} \cite{Schoberl16}, \url{https://ngsolve.org/}.

\newcommand{\ya}{x_{0,1}}
\newcommand{\yb}{x_{0,2}}
\newcommand{\xa}{x_1}
\newcommand{\xb}{x_2}
\subsection{Accuracy test.}
We consider the  Navier-Stokes equations \eqref{ns-eq-ale} 
on the domain $\Omega^t=[0,1]\times [0,1]$ for $t\in [0,0.5\pi]$ with
homogeneous Dirichlet boundary conditions for velocity, and 
choose the source term such that the exact
solution is given as follows:
\begin{alignat*}{2}
  \bigg\{\begin{split}
    u_1(\mathbf x, t) &= \;
  4(\xa(1-\xa))^2(2\xb-6\xb^2+4\xb^3)\sin(t),\\
    u_2(\mathbf x, t) &=\; 
  -4(\xb(1-\xb))^2(2\xa-6\xa^2+4\xa^3)\sin(t),
  \end{split}
  &&\quad\quad p(\mathbf x, t) =\; \sin(\xa+\xb).
  \end{alignat*}
\noindent Density $\rho$ is taken to be $1$ and we 
consider viscosity $\mu$ either to be $1$ or $10^{-6}$.
We use the following prescribed ALE map 
$
  \mathbf x(\mathbf x_0, t) =  
  \mathcal{A}_t(\ya, \yb) = 
\left(\ya + u_1(\mathbf x_0, 2t),\;\; 
\yb + u_2(\mathbf x_0, 2t)\right)
$
for the mesh movement, where $\bld u = (u_1, u_2)$,
$\mathbf x = (\xa, \xb),$ and $\mathbf x_0 = (\ya, \yb)$.
%The x-component of velocity contour on a sample mesh  
%with mesh size $h=1/16$ is shown in Fig.~\ref{fig:ac} for 
%$t=.25\pi$ and $t=.5\pi$.
%\begin{figure}[h!]
%\centering
%\includegraphics[width=.3\textwidth]{nsac1}\quad\quad\quad
%\includegraphics[width=.3\textwidth]{nsac2}
%  \caption{Sample mesh and x-component velocity contour. Left: $t=.25\pi$.
%  Right: $t=.5\pi$.}
%\label{fig:ac}
%\end{figure} 

We compare our HDG method \eqref{ale-hdg-imex}, denoted as \HDG, with 
the ALE method using Scott-Volegius element (on barycentric refined meshes), 
denoted as \SV, and that using high-order Taylor-Hood element,
denoted as \TaH. We use either polynomials of degree $k=2$ or $k=3$ for 
the velocity approximation. % for all three methods.
For the time discretization, 
we use the third order IMEX-SBDF3 method \cite{AscherRuuthWetton93}. 
Time step size is taken to be small enough so that spatial error dominates
temporal error.
%Initial conditions are taken to
%be the interpolation of the true solution at $0, \delta t, 2\delta t$.
%Table~\ref{table:errp2} and 
Table~\ref{table:errp3}
lists the history of
convergence for the  $L^2$-error in
the velocity approximation
%with viscosity $\mu=1$ and $\mu=10^{-6}$ 
for  $k=2$ and $k=3$. % respectively. 
We observe that 
the order of convergence for the error for all three methods are optimal
($k+1$) for the case $\mu=1$. But for 
the convection-dominated case $\mu=10^{-6}$, the accuracy for \HDG\;
clearly outperforms those for \SV\; and \TaH.
In particular, \HDG\; velocity 
error is not affected by the viscosity coefficient,
however, \SV\; looses roughly one order of approximation  and \TaH\; 
looses up to two order of approximation for the case $\mu=10^{-6}$.
%for both loose roughly one order of approximation. We also observe
%that \TaH\; without grad-div stabilization produces even less accurate results
%for $\mu=10^{-6}$, with only second order of convergence observed for the case
%$k=3$. 
\begin{table}[ht!]
\begin{center}
% \footnotesize
\scalebox{0.78}{%
\begin{tabular}{| c|c | c | c | c | c | c | c | c 
  | c | c | c | c | c | c |c|c|c|c|c|} \hline
  &   &\multicolumn{4}{c|}{\HDG} & 
  \multicolumn{4}{c|}{\SV} 
  & \multicolumn{4}{c|}{\TaH} 
  \\\hline
  &   &
  \multicolumn{2}{c|}{$\mu=1$} & \multicolumn{2}{c|}{$\mu=10^{-6}$}& 
  \multicolumn{2}{c|}{$\mu=1$} & \multicolumn{2}{c|}{$\mu=10^{-6}$}& 
  \multicolumn{2}{c|}{$\mu=1$} & \multicolumn{2}{c|}{$\mu=10^{-6}$} 
  \\\hline
  $k$& $1/h$ & Error & Order  & Error & Order  & Error & Order& Error & Order& Error & Order& Error & Order\\
\hline 
 &8  & 2.27e-04  &  --   & 1.45e-04  &  --  & 4.74e-04  &  --   & 4.93e-04  &  --   & 1.71e-04  &  --   & 7.08e-03  &  --  \\ 
2&16 & 2.24e-05  &  3.34 & 1.88e-05  &  2.95& 5.49e-05  &  3.11 & 1.11e-04  &  2.15 & 2.12e-05  &  3.01 & 2.04e-03  &  1.80\\ 
 &32 & 2.46e-06  &  3.19 & 2.37e-06  &  2.99& 6.30e-06  &  3.12 & 2.60e-05  &  2.10 & 2.65e-06  &  3.00 & 5.19e-04  &  1.97\\ 
\hline 
  & 8  & 1.39e-05  &  --   & 1.13e-05  &  --     & 1.88e-05 &  --  & 3.02e-05  &  --  & 1.05e-05  &  --   & 6.62e-04  &  --  \\   
  3& 16 & 7.68e-07  &  4.17 & 7.23e-07  &  3.96  &1.06e-06 &  4.15& 2.78e-06  &  3.44& 6.41e-07  &  4.04 & 1.76e-04  &  1.91\\ 
  & 32 & 4.46e-08  &  4.11 & 4.57e-08  &  3.99 &6.15e-08 &  4.10& 2.90e-07  &  3.26& 3.93e-08  &  4.03 & 4.25e-05  &  2.05\\ 
\hline 
\end{tabular}
  }
\caption{\it History of convergence of the $L^2$-error for the  velocity approximation.}
\label{table:errp3}
\end{center}
\end{table}
%\newpage % FIXME

\subsection{Free boundary problem.}
Here we apply our HDG method \eqref{ale-hdg-imex} to a free boundary problem 
\cite{Ramaswamy87,Duarte04} that models the propagation of a solitary wave.
The ALE Navier-Stokes equations \eqref{ns-eq-ale} is solved on the
free-boundary domain shown in Fig.~\ref{fig:free}.
%\begin{wrapfigure}{r}{.45\textwidth}
\begin{figure}[ht!]
%  \vspace{-4ex}
  \begin{center}
\includegraphics[width=.45\textwidth]{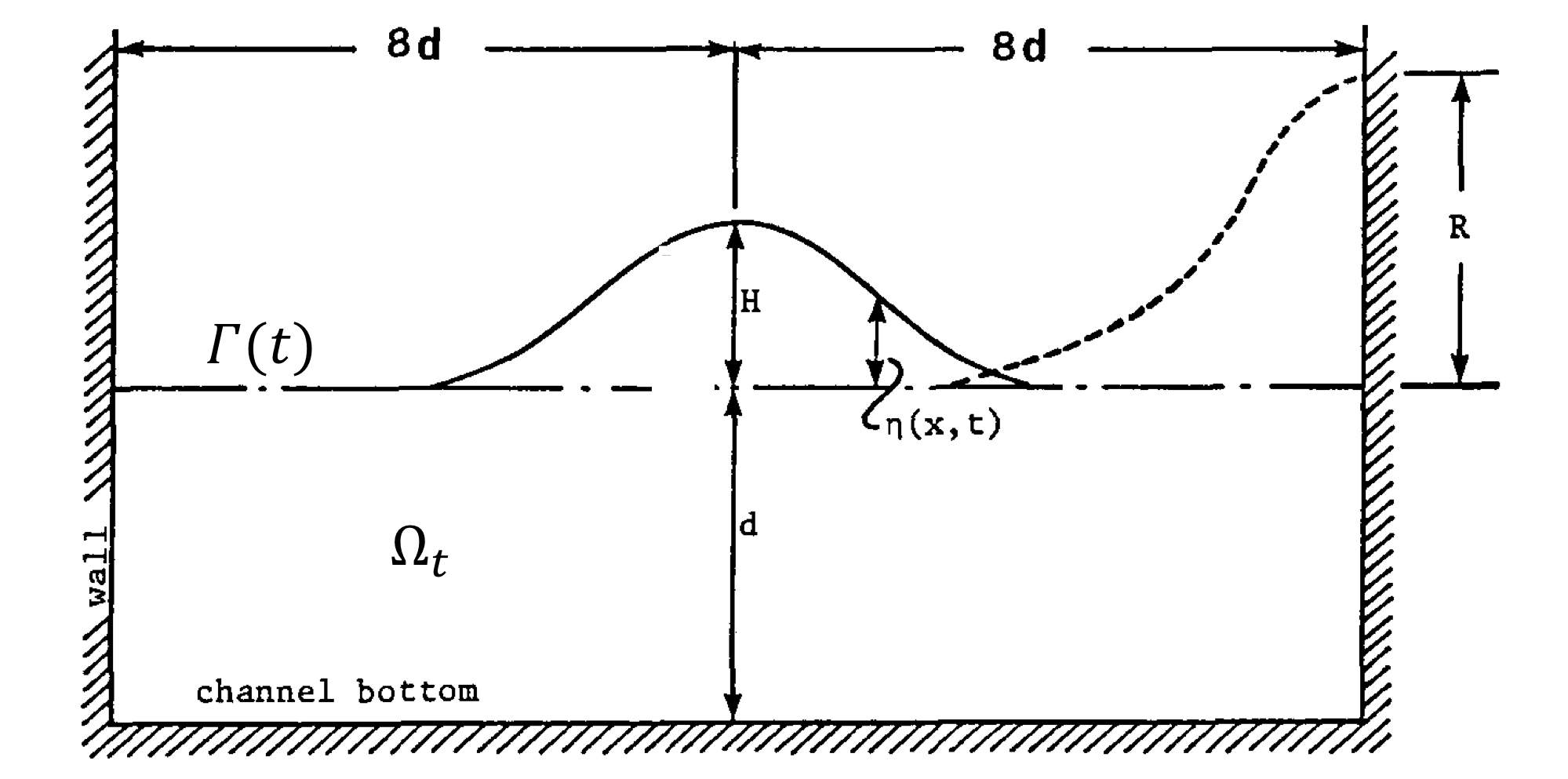}
  \end{center}
%  \vspace{-5ex}
    \caption{\it The free-boundary domain.}
\label{fig:free}
%  \vspace{-2ex}
%\end{wrapfigure} 
\end{figure} 
The top boundary $\Gamma(t)$ in Fig.~\ref{fig:free} is a free surface on which 
we impose a stress-free boundary condition.
Free-slip boundary condition is used for the other boundaries. 
The domain velocity on the free surface is set to be equal to the fluid
velocity, 
\begin{align}
  \label{free-bc}
  \left.\frac{\partial \mathbf x}{\partial
  t}\right|_{\mathbf x_0} = 
  \bld \omega(\mathbf x, t) = \bld u(\mathbf x, t), \quad \forall 
  \mathbf x\in \Gamma(t),
\end{align}
which expresses the fact that the free boundary $\Gamma(t)$ consists
of the same particles for all $t>0$.
The initial condition is taken from Laitone's solitary wave approximation
\cite{Laitone60}, with the free surface elevation $\eta$ given by
%\[
$
\eta = d+H\mathrm{sech}^2\left(\sqrt{\frac{3H}{4d^3}}\mathbf x_1\right),
%\]
$
and $H=2$ is the initial wave height and $d=10$ is the still water depth.
Density and viscosity are taken to be $1$, and the source $\bld f$ is 
a gravitational acceleration of magnitude $9.8$ acts vertically on the downward
direction.

For this problem the ALE map is not prescribed. We apply the semi-implicit
approach \cite{Fernandez07} to update the domain, see 
Section \ref{sec:twophase}. 
We use IMEX-SBDF2 time stepping ($s=2$) with 
the HDG scheme \eqref{ale-hdg-imex} using quadratic velocity approximation on a mesh with 
mesh size $h=2$. Time step size is taken to be $\delta t = 0.025$, and the final
time $T=12$.
A simple harmonic extension on the reference configuration is adopted for
the ALE map, which is approximated by continuous linear finite elements.
Fig.~\ref{fig:free-soln} presents the velocity
magnitude at some subsequent times along with the mesh.
\begin{figure}[ht!]
  \begin{center}
\includegraphics[width=.8\textwidth]{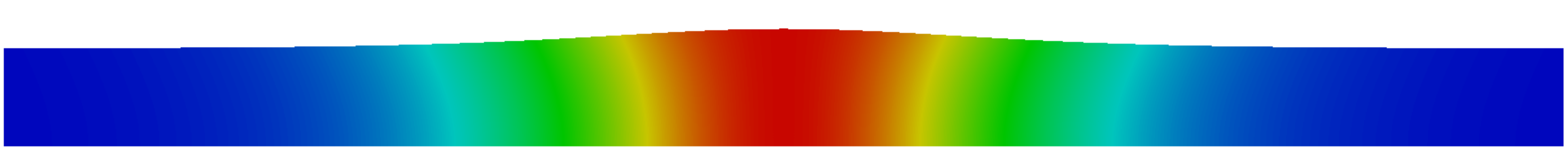}
\includegraphics[width=.8\textwidth]{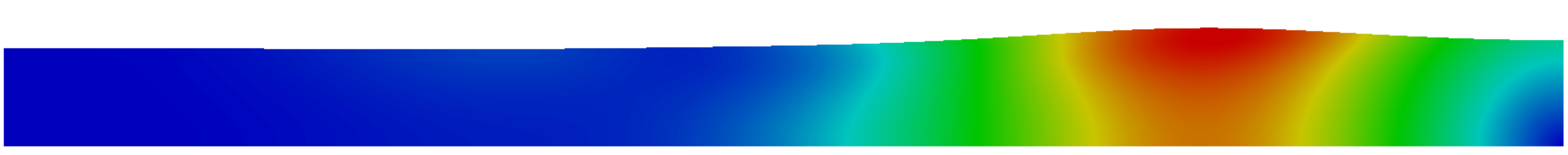}
\includegraphics[width=.8\textwidth]{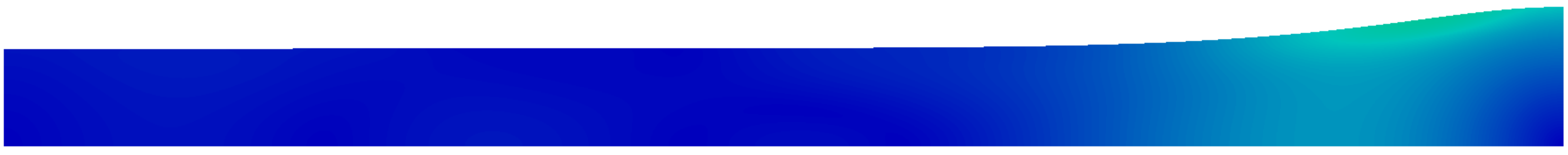}
\includegraphics[width=.8\textwidth]{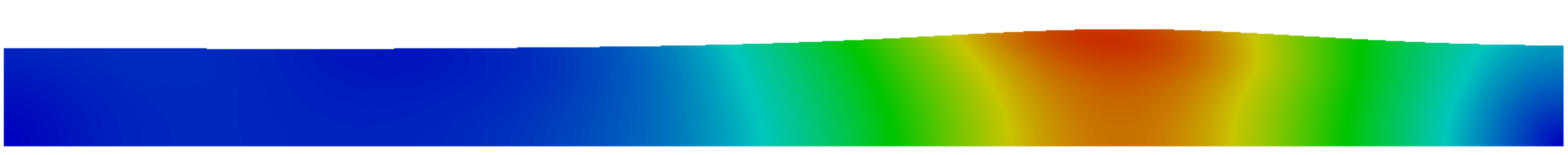}
  \end{center}
    \caption{\it Velocity magnitude (from left to right and top to
    bottom) at time 
  $t=0, 4, 8, $ and $12$.
  }
\label{fig:free-soln}
\end{figure} 
Our numerical simulation agrees well with computational results given by 
\cite{Ramaswamy87,Duarte04}. 
See Table~\ref{table:free} for a comparison of 
the run-up height, the time when the wave hits the
right side wall and the maximum pressure.
See also Fig.~\ref{fig:free-height} for the plot of the run-up height versus
time, which matches well with numerical results reported in \cite{Duarte04}.

%\begin{wraptable}{l}{.5\textwidth}
\begin{table}[ht!]
\begin{center}
  %\footnotesize
  \scalebox{1}
  {
  \begin{tabular}{| c | c | c | c |} \hline
  & Duarte\cite{Duarte04} &Ramaswamy\cite{Ramaswamy87}  &{\HDG} 
  \\\hline
  Height & 14.27 &14.48& 14.33\\
  Time & 7.7 & 7.6& 7.65\\
  Pressure & 130 & 131.66 & 131.89\\
\hline 
\end{tabular}
}
\end{center}
  \caption{\it Comparison table: height, time and pressure.}
\label{table:free}
%\end{wraptable}
\end{table}

%\begin{wrapfigure}{r}{.45\textwidth}
\begin{figure}[ht!]
  \begin{center}
 \begin{tikzpicture} % run-up height 
          \tikzstyle{every node}=[font=\footnotesize]
\begin{axis}[
	width=0.4\textwidth,
	height=0.2\textheight,
 	xlabel={time},
	ylabel={Maximum Height},
    xtick={0,2,4,6,8,10,12},
    ytick={11.5, 12, 12.5, 13,13.5, 14, 14.5},
    xmin=0,
    xmax=12,
    ymin=11.5,
    ymax=14.5,
   	yticklabel style={/pgf/number format/fixed,/pgf/number format/precision=1},   	
   	every axis plot/.append style={line width=0.8pt, smooth},
   	no markers,   	
   	legend style={at={(axis description cs:0.01,0.65)},anchor=south west,
    draw=none},
  x label style={at={(axis description cs:0.5, 0.05)},anchor=north},
  ]
\addplot[black] table[x index=0, y index=1]{soli.dat};
\addplot[black, dashed] table[x index=0, y index=1]{soliref.dat};
\addlegendentry{\HDG}
\addlegendentry{Duarte \cite{Duarte04}}
\end{axis}
\end{tikzpicture}
\end{center}
  \vspace{-4ex}
  \caption{\it Run-up height.}%: \HDG\;(solid), Duarte \cite{Duarte04} (dashed).}
  \label{fig:free-height}
%\end{wrapfigure}
\end{figure}
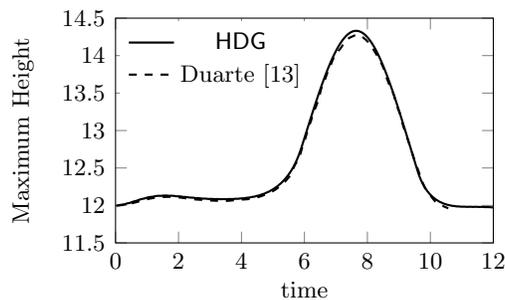

\subsection{Rising bubble problem}
Here we solve the rising bubble problem \cite{Hysing09} using 
the IMEX-ALE-TVNNS-HDG method detailed in Section \ref{sec:twophase}.
The initial configuration, see Fig.
\ref{fig:bubble}, consists of a circular bubble of radius 
$r_0=0.25$ centered at $(0.5,0.5)$ in a $[1\times 2]$ rectangular domain.
\begin{figure}[h!]
\centering
\includegraphics[width=.3\textwidth]{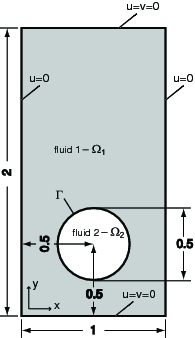}
  \caption{Initial configuration and boundary conditions for the rising bubble
  problem.}
\label{fig:bubble}
\end{figure}
The density of the bubble is smaller than that of the surrounding fluid 
$(\rho_2<\rho_1)$. The no-slip boundary condition $(\bld u = 0)$ is used at
the top and bottom boundaries, whereas the free slip condition ($\bld
u\cdot{\bld{n}}=0$, 
$(\bld\sigma\bld n)\cdot\bld t=0$) is imposed on the vertical walls.

The same two test cases 
%with small and large density ratios 
proposed in \cite{Hysing09} are considered in this study.
Table \ref{table:bubble} 
lists the fluid and physical parameters use for the two test cases. 
%\begin{wraptable}{l}{.5\textwidth}
\begin{table}[ht!]
\begin{center}
  %\footnotesize
  %\scalebox{1}
  {
  \begin{tabular}{ c  c  c  c c c c c} \hline
    Test Case & $\rho_1$ &$\rho_2$& $\mu_1$&$\mu_2$&$\bld f$
              &$\tau$\\
    1 &	1000 &	100 	&10 	&1 &	(0,-0.98)& 	24.5\\
    2 &	1000 &	1 	&10 	&0.1 &	(0,-0.98)& 	1.96\\
    \hline 
\end{tabular}
}
\end{center}
\caption{\it Physical parameters 
  %and final time 
for the test cases.}
\label{table:bubble}
%\end{wraptable}
\end{table}
We take final time for Test Case 1 to be $T=3$, and 
for Test Case 2 to be $T=2$.
We note that for Test Case 1, the surface tension effects are strong enough
to hold the bubble together, which will end up in a
ellipsoidal shape,
but for Test Case 2, the decrease in surface tension causes 
the bubble to assume a more complex shape and develop thin filaments which
eventually break off. The time of break up for Test Case 2 is 
predicted \cite{Hysing09} to occur between 
$t=2.2$ and $2.4$.
The current ALE scheme without remeshing can not treat interface breakup,
so we stop the simulation at time $T=2$ for this case.

For both cases, we consider the IMEX-SBDF2 based ALE-TVNNS-HDG scheme in Section
\ref{sec:twophase} with polynomial degree $k=2$ and
quadratic isoparametric triangular elements
on 
three different  meshes as depicted in Fig.~\ref{fig:bm}.
\begin{figure}[h!]
\centering
\includegraphics[width=.28\textwidth]{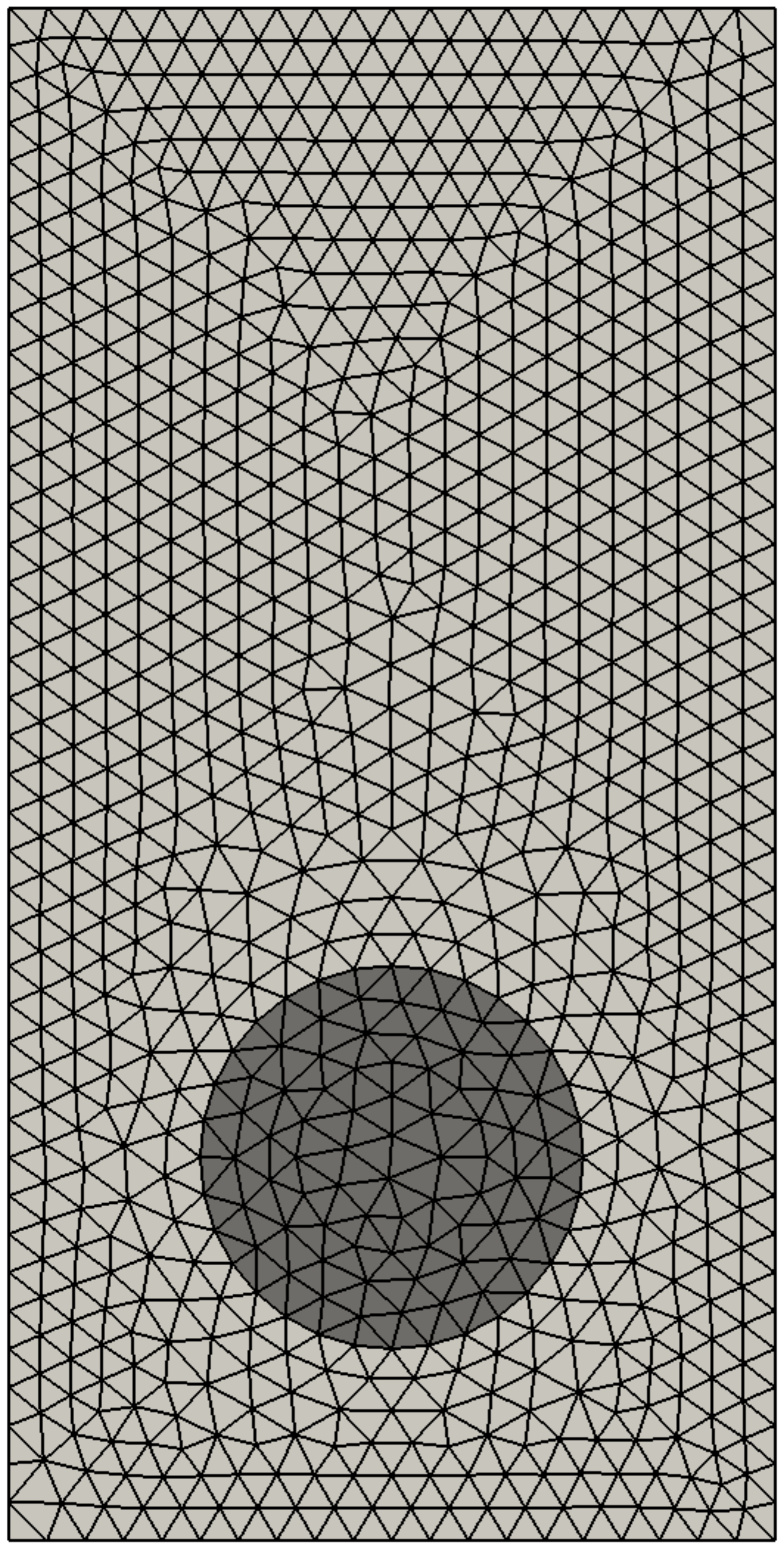}
\includegraphics[width=.28\textwidth]{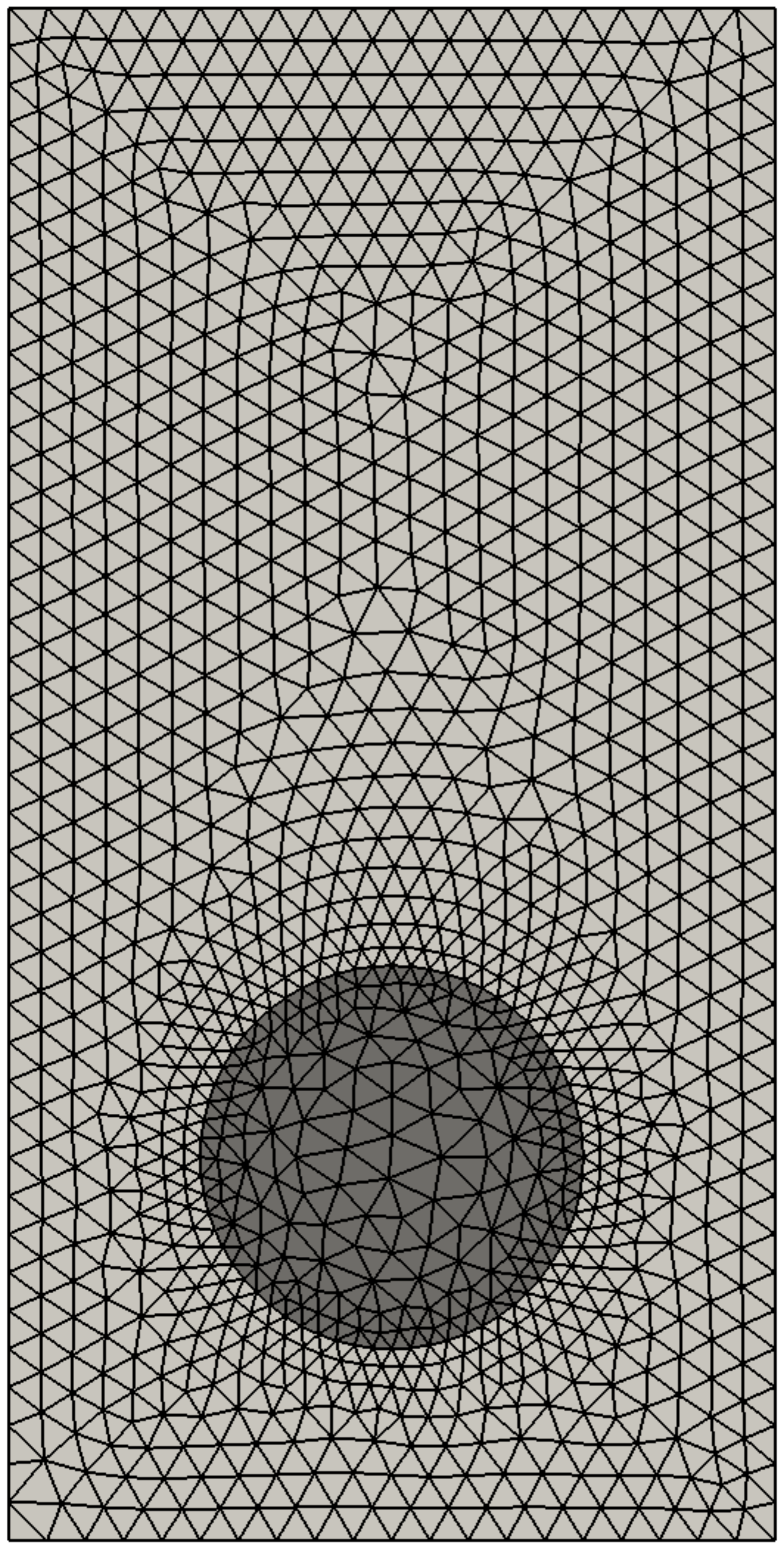}
\includegraphics[width=.28\textwidth]{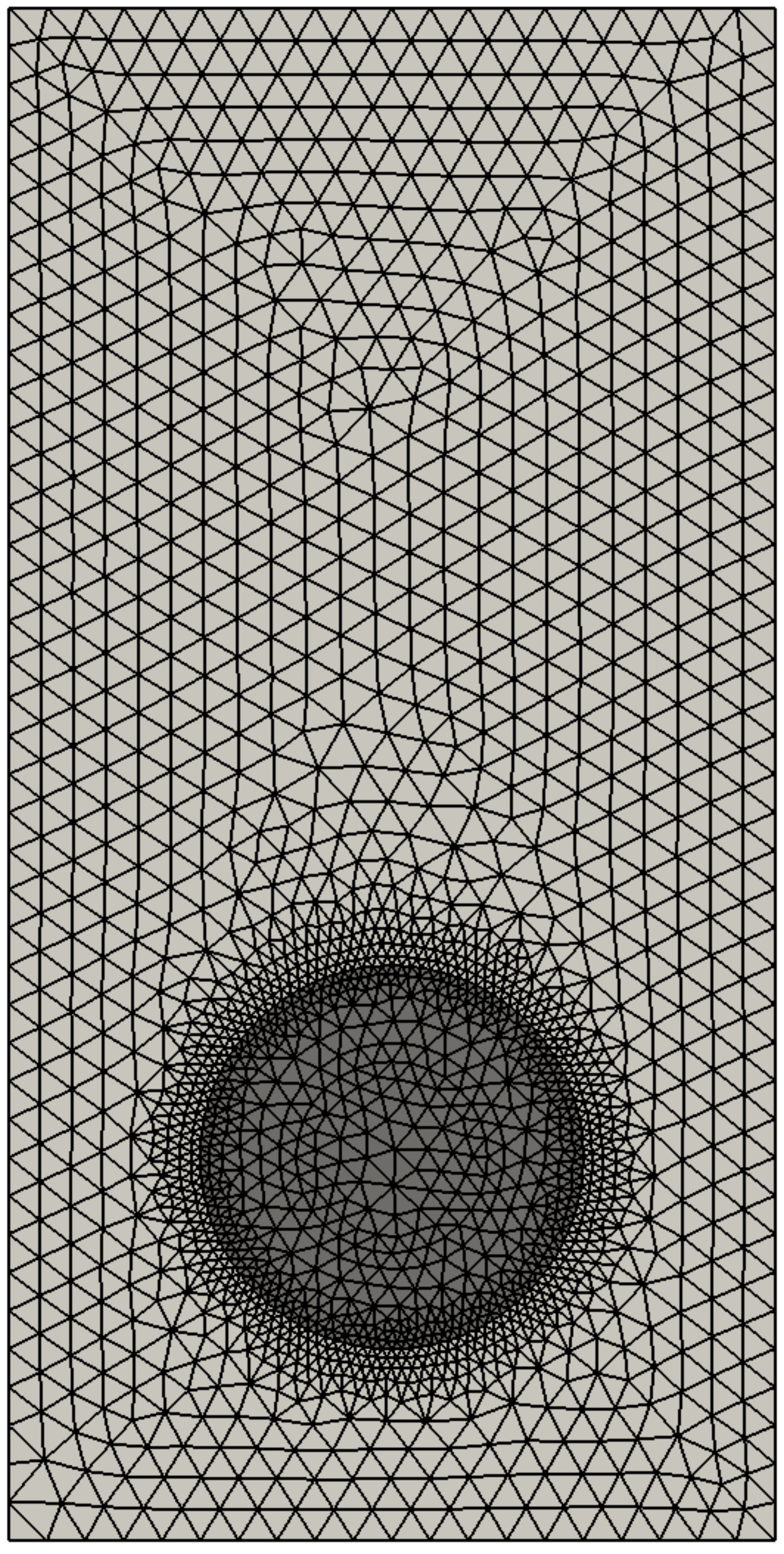}
  \caption{Left: mesh {\sf M1}.
    Middle: mesh {\sf M2}.
  Right: mesh {\sf M3}.
Dark color: domain $\Omega_2$.}
\label{fig:bm}
\end{figure}
Mesh {\sf M1} has mesh size $h=1/20$ away from the interface and 
$h_{\Gamma}=1/20$ near the interface, which 
consists of 941 nodes and 1260 elements.
Mesh {\sf M2} has mesh size $h=1/20$ away from the interface and 
$h_{\Gamma} = 1/40$ near the interface, which 
consists of 1216 nodes and 2310 elements.
Mesh {\sf M3} has mesh size $h=1/20$ away from the interface and 
$h_{\Gamma} = 1/80$ near the interface, which 
consists of 1216 nodes and 2310 elements.
For all the simulation, we take time step size to be 
$\delta t = h_{\Gamma}/8$ for Test Case 1 and $\delta t = h_{\Gamma}/4$
for Test Case 2.

\subsubsection{Interface node distribution}
We first compare the mesh quality for two different choices of 
interface velocity updates mentioned in Remark \ref{rk:step1}.
We consider Test Case 1 and use the coarse mesh {\sf M1}.
%While the scheme based on velocity update \eqref{omega-eq} produces 
%good quality mesh throughout the simulation till time $T=3$, 
%the scheme based on velocity update \eqref{omega-u} crashes at around
%$t=1.65$. 
The mesh deformation for the approaches 
\eqref{omega-u} and \eqref{omega-eq} at time 
$t=1.5$ and for the approach \eqref{omega-eq} at time $t=3$
are shown in Fig.\ref{fig:bm2}.
It is clear to observe that for the approach \eqref{omega-u}, the 
interface nodes tends to accumulate at the bottom of the bubble, which 
eventually leads to mesh tangling and crash of the simulation at around $t=1.65$.
However, for the approach \eqref{omega-eq}, the interface nodes seem to be
equidistributed, which produce good quality mesh throughout the simulation.
For this reason, we choose to use the approach \eqref{omega-eq}
for the interface velocity update for all the tests.

\begin{figure}[h!]
\centering
\includegraphics[width=.28\textwidth]{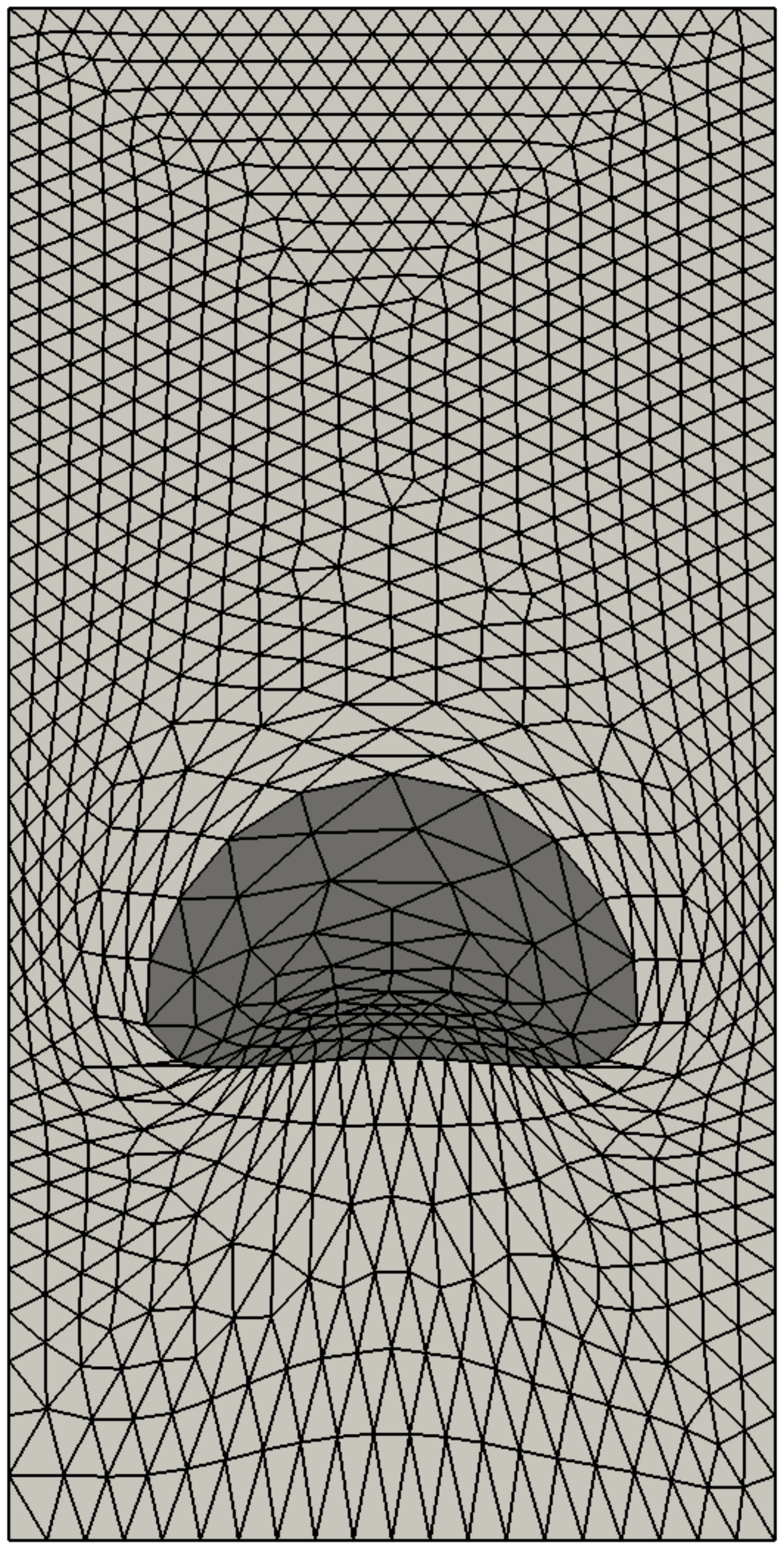}
\includegraphics[width=.28\textwidth]{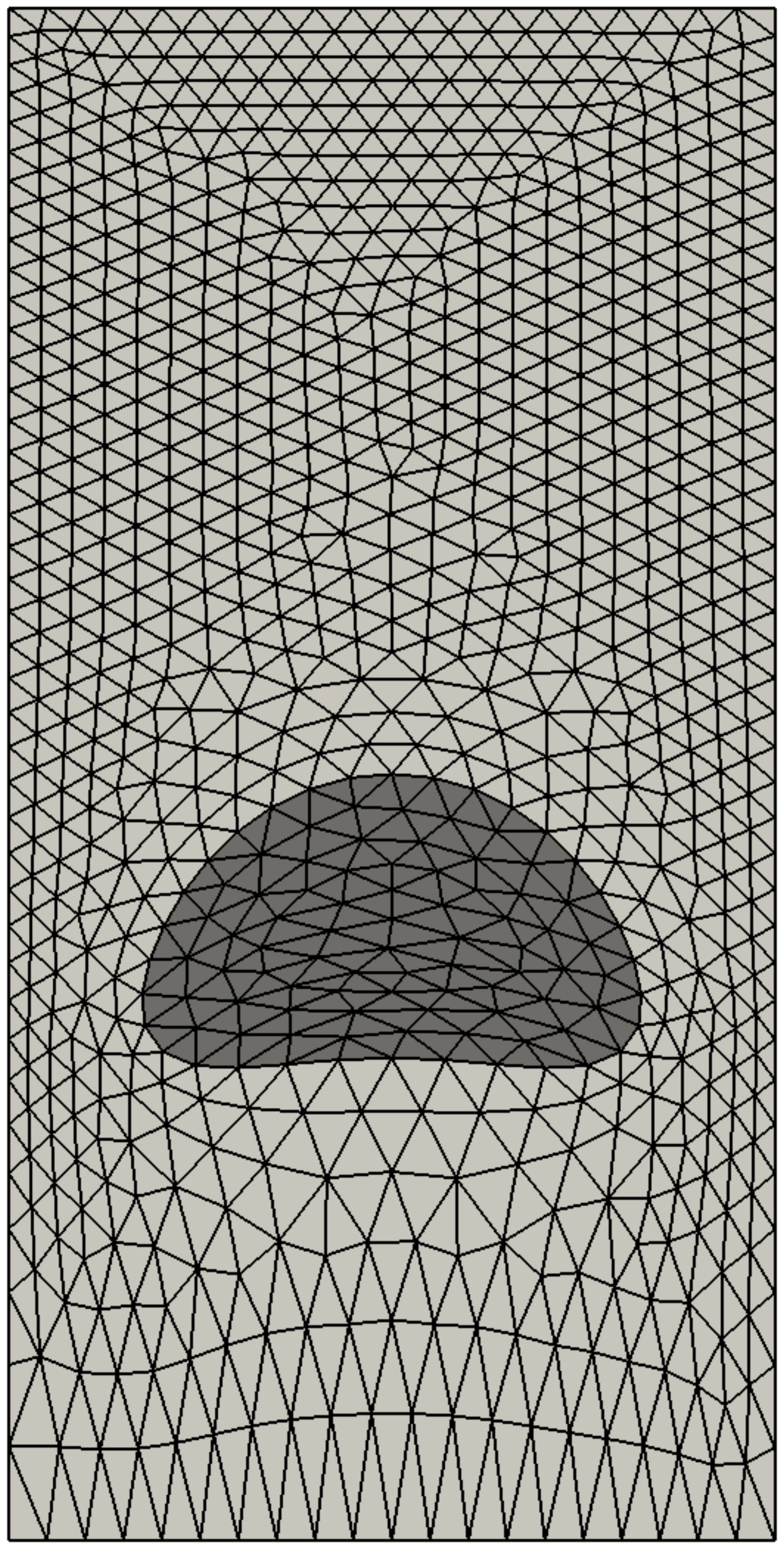}
\includegraphics[width=.28\textwidth]{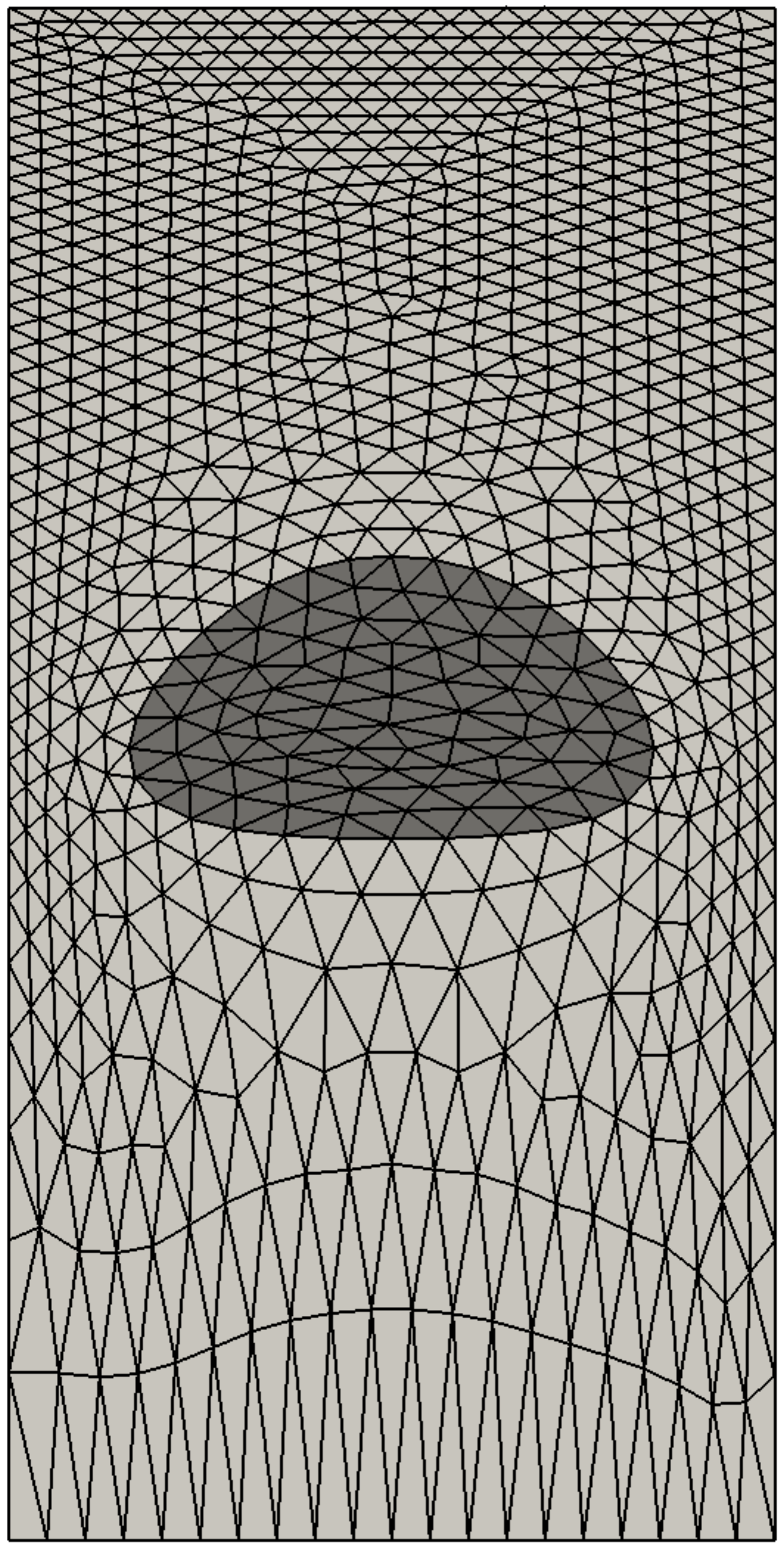}
  \caption{Mesh deformation on mesh {\sf M1} for Test Case 1
    with different interface velocity update approaches.
  Left: approach \eqref{omega-u} at $t=1.5$. 
Middle: approach \eqref{omega-eq} at $t=1.5$.
Right: approach \eqref{omega-eq} at $t=3$.}
\label{fig:bm2}
\end{figure}

\subsubsection{Bubble shape}
The bubble shapes at final time are present in 
Fig.~\ref{fig:bs}.
For Test Case 1, we also present the reference data
from \cite{Hysing09} using the MooNMD code 
on the finest mesh 
with $900$ degrees of freedom on the interface
and $6000$ total time steps.
We observe that for Test Case 1, 
the present results are indistinguishable
from the reference data even for the coarsest mesh {\it M1}.
We also observe similar results 
for Test Case 2 on all three meshes. 
\begin{figure}[h!]
\centering
\includegraphics[width=.4\textwidth]{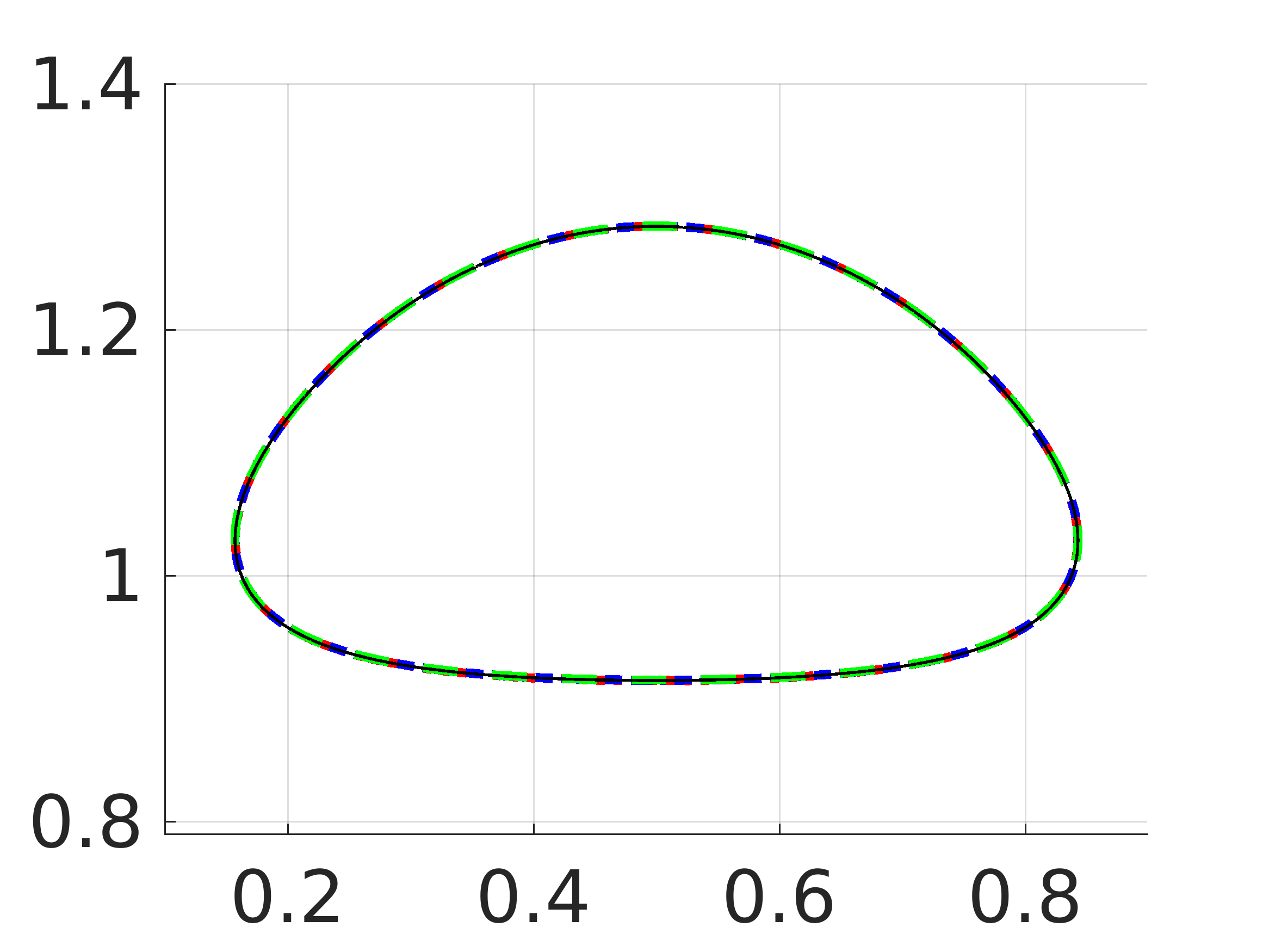}
\includegraphics[width=.4\textwidth]{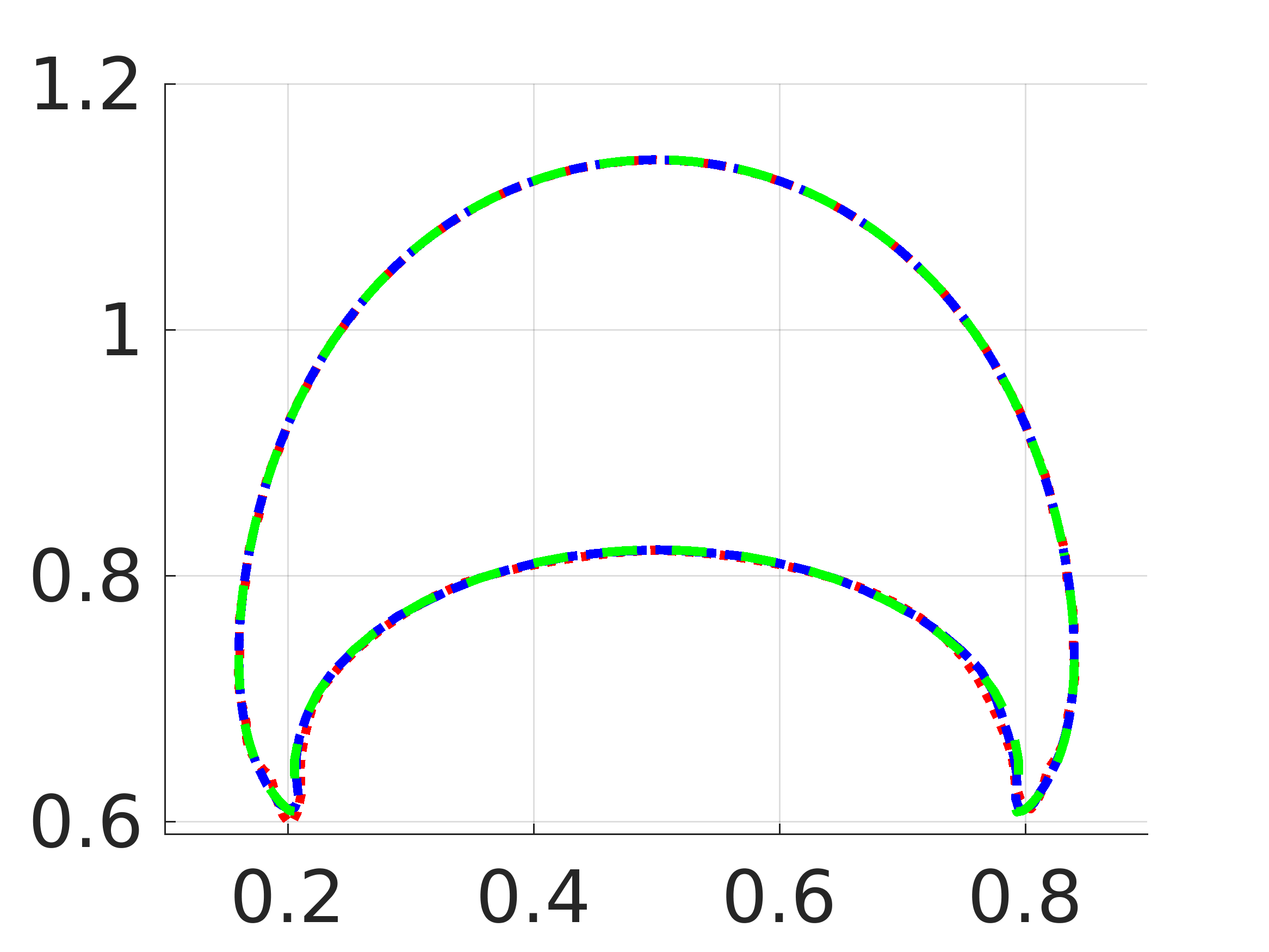}
  \caption{
    Bubble shapes at the final time. Left: Test Case 1 at time
    $t=3$. Right: Test Case 2 at time $t=2$.
    ({\sf M1}: dotted red, {\sf M2}: dashdotted blue, 
    {\sf M3}: dashed green. 
    MooNMD reference data \cite{Hysing09} (for Test Case 1): solid black.)
  }
\label{fig:bs}
\end{figure}
Zoom-in of the deformed mesh around $\Omega_2$ 
at $t=1$ and $t=2$ for Test Case 2 are shown in Fig.~\ref{fig:bd}.
We again observe no interface node clustering for all meshes.
However, due to the large deformation of the bubble shape, 
we observe self-intersection of the mesh for {\sf M1} and {\sf M2}
at time $t=2$ near the left trailing corner of the bubble.
\begin{figure}[h!]
\centering
\includegraphics[width=.28\textwidth]{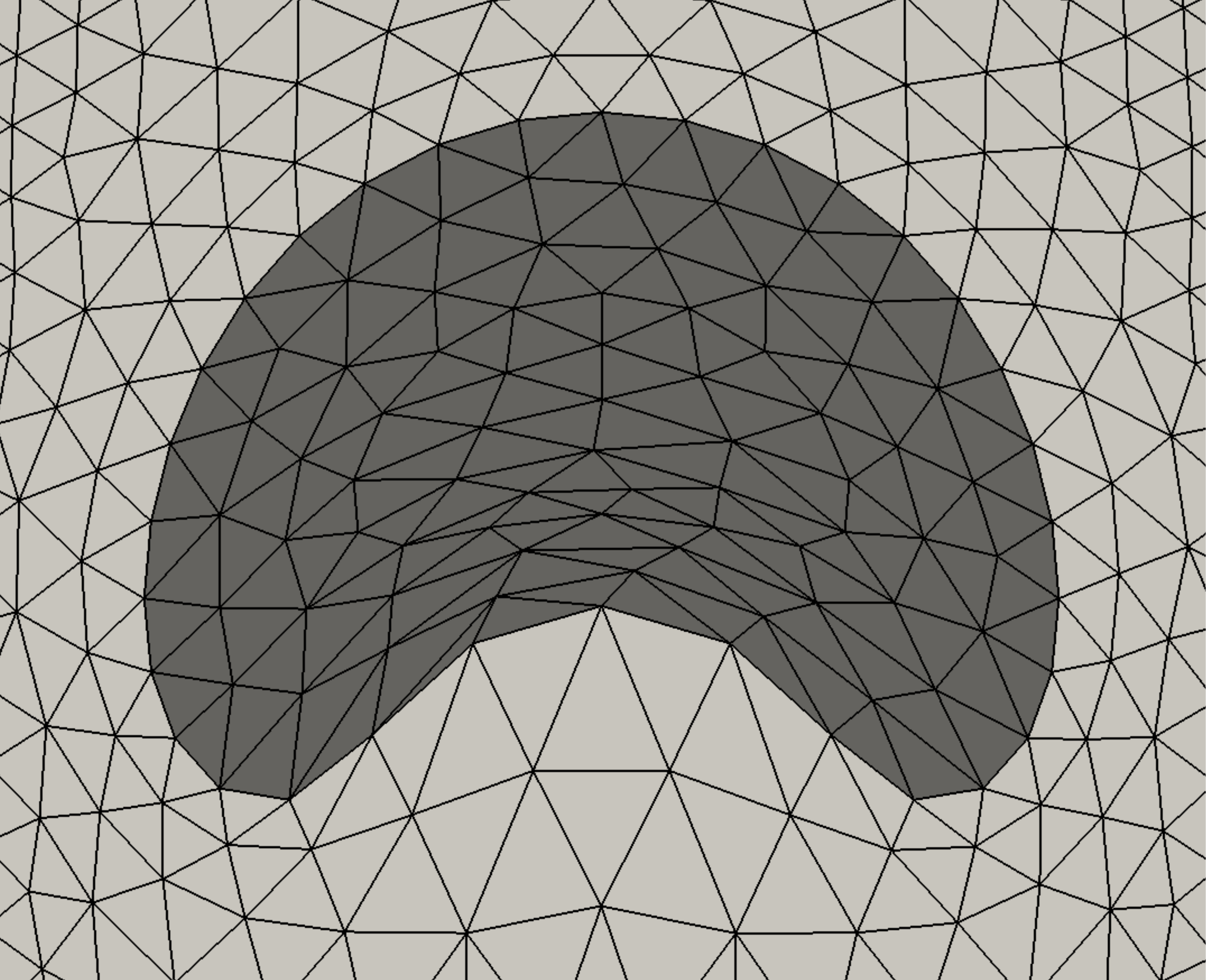}
\includegraphics[width=.28\textwidth]{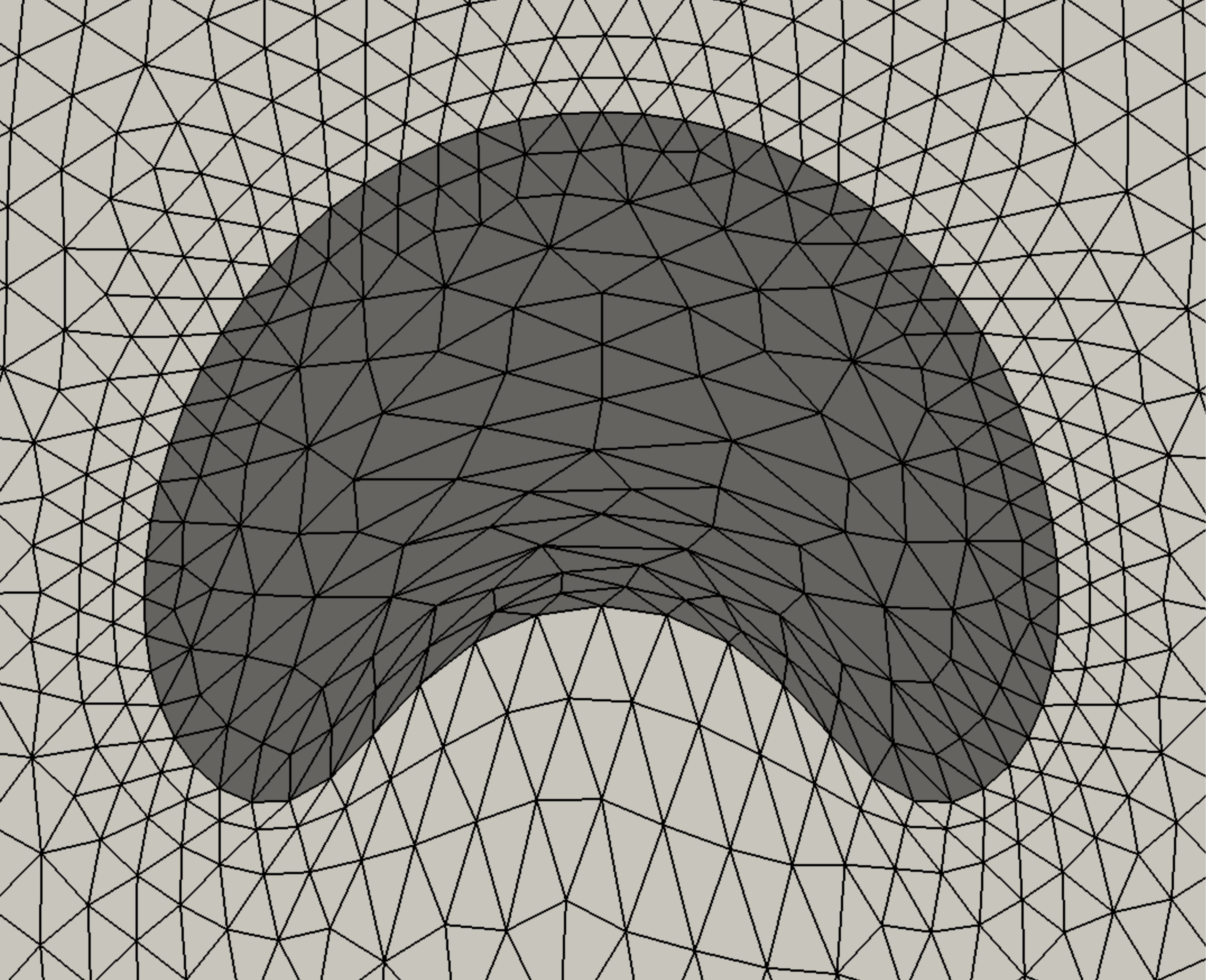}
\includegraphics[width=.28\textwidth]{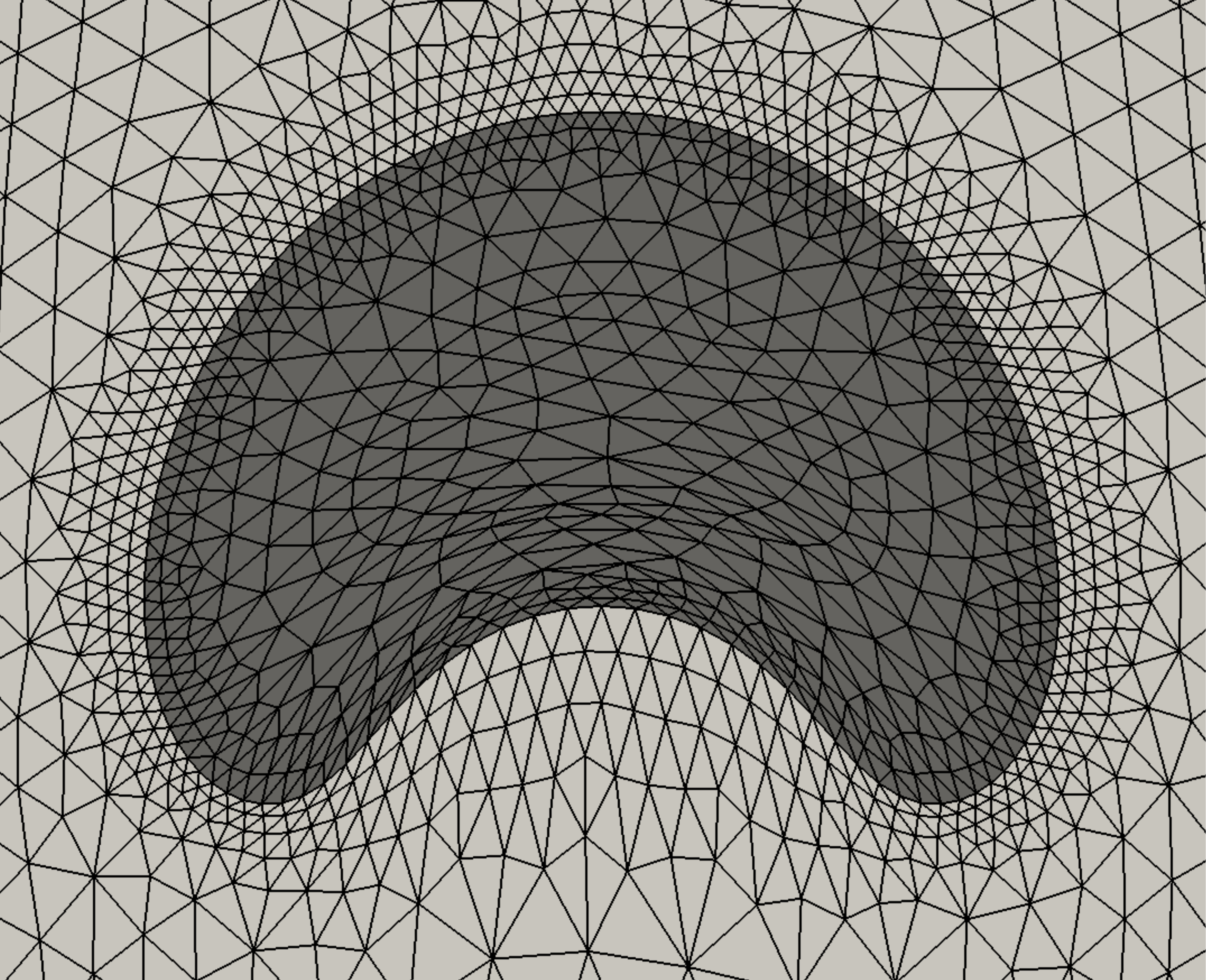}\\[.2ex]
\includegraphics[width=.28\textwidth]{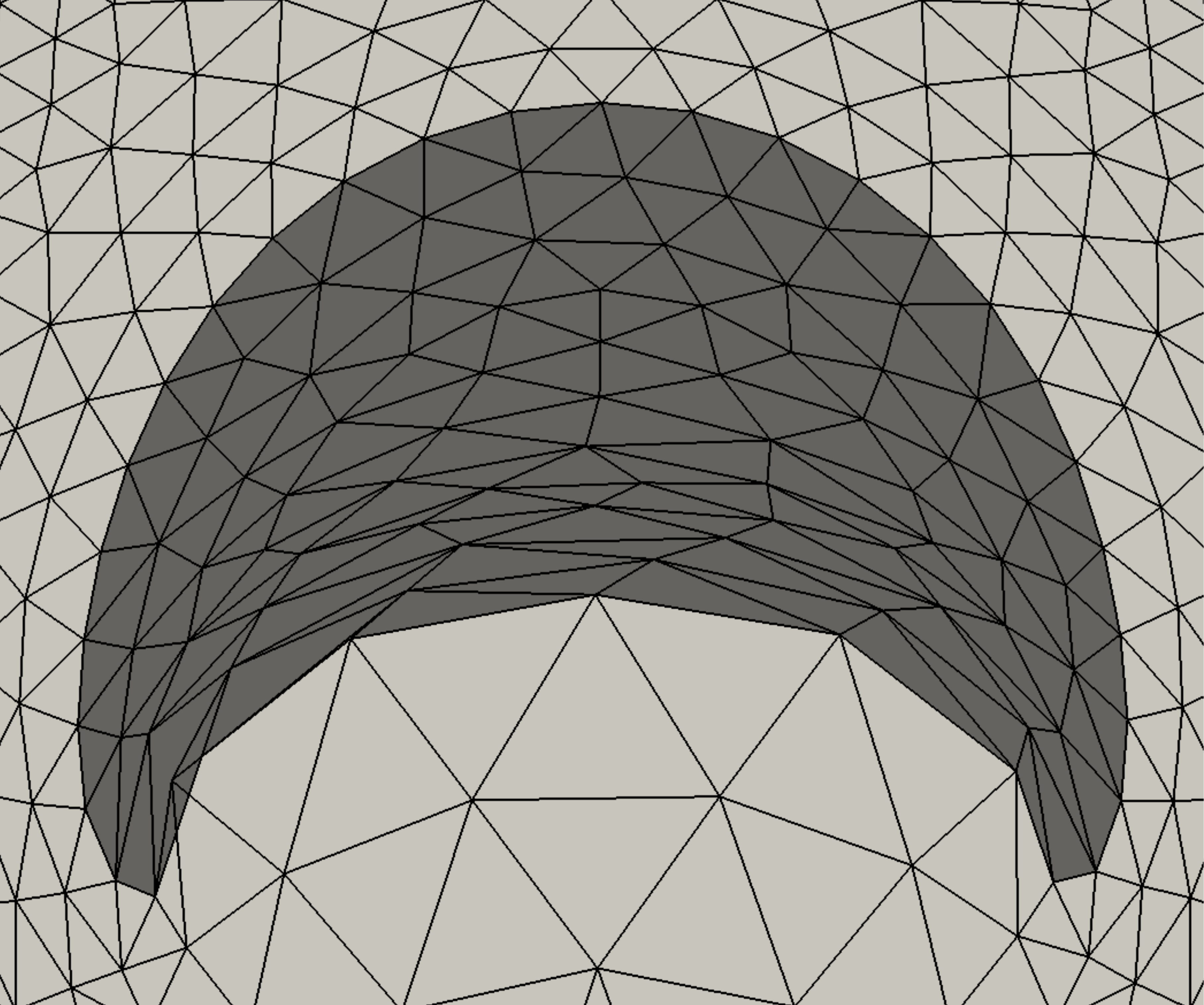}
\includegraphics[width=.28\textwidth]{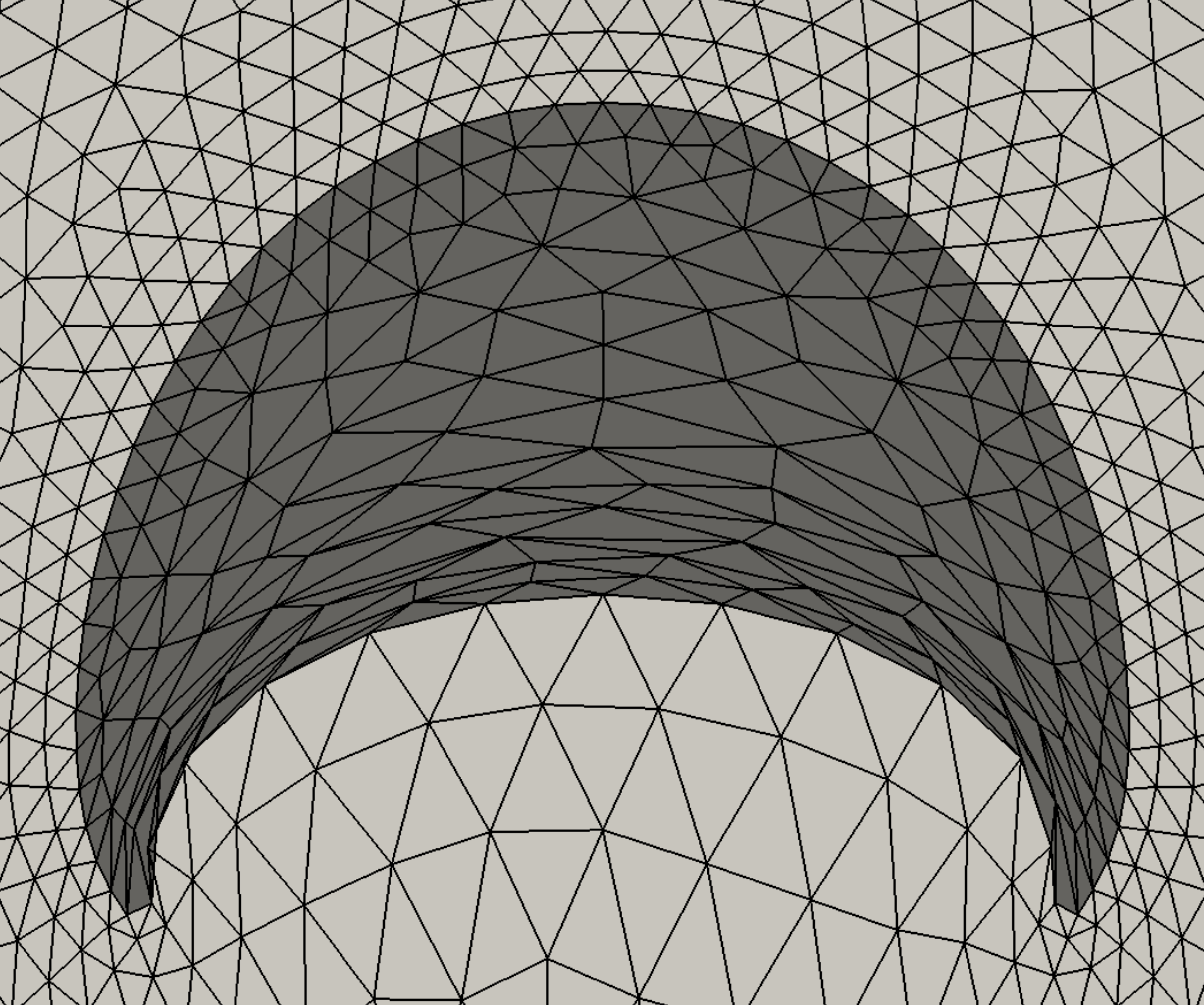}
\includegraphics[width=.28\textwidth]{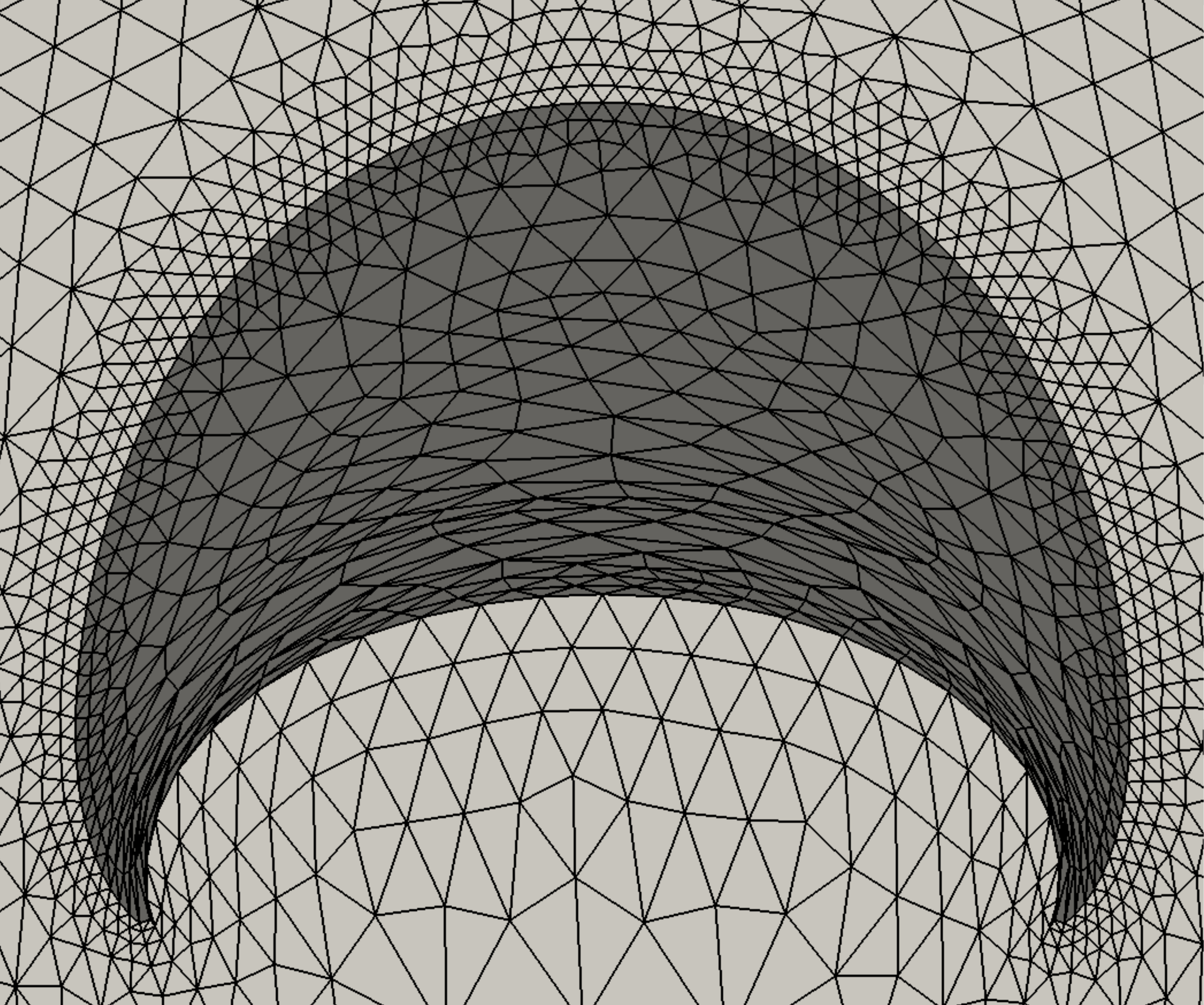}
  \caption{Zoom-in of the 
  deformed mesh around $\Omega_2$.
Left: {\sf M1}. Middle: {\sf M2}. Right: {\sf M3}.
Top: $t=1$. Bottom: $t=2$.
}
\label{fig:bd}
\end{figure}

\subsubsection{Benchmark quantities}
Following \cite{Hysing09}, we track 
the following benchmark quantities over time:
\begin{itemize}
  \item [(i)] {\it Center of mass}: the centroid of mass 
    for the bubble
    \[
      \mathbf X_c = (x_c, y_c) = 
      \frac{\int_{\Omega_2}\mathbf x\,\mathrm{dx}}{
      \int_{\Omega_2}1\,\mathrm{dx}}.
    \]
  \item [(ii)] {\it Rise velocity}: the mean bubble velocity  
    \[
      \mathbf X_c = (x_c, y_c) = 
      \frac{\int_{\Omega_2}\bld u\,\mathrm{dx}}{
      \int_{\Omega_2}1\,\mathrm{dx}}.
    \]
  \item [(iii)] {\it Circularity}: the "degree of circularity":
    \[
      \cancel{c}=\frac{P_a}{P_b}
      =\frac{\text{perimeter of area-equivalent circle}}{
      \text{perimeter of bubble}}.
    \] 
\end{itemize}

The time evolution of these benchmark quantities are shown 
in Fig.~\ref{fig:bb}. Our results are in excellent agreement with 
the MooNMD reference data \cite{Hysing09}.
\begin{figure}[h!]
\centering
\includegraphics[width=.4\textwidth]{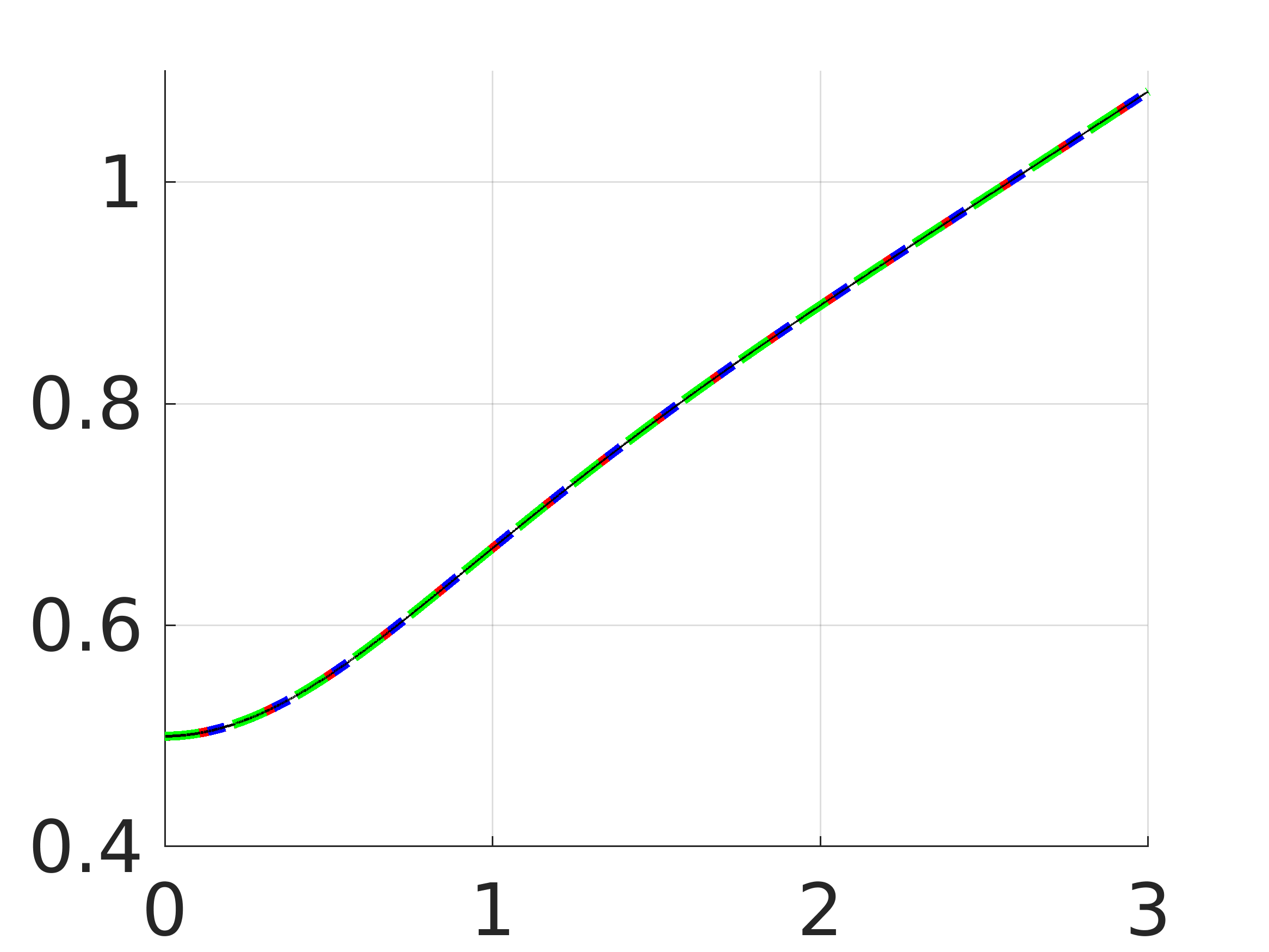}
\includegraphics[width=.4\textwidth]{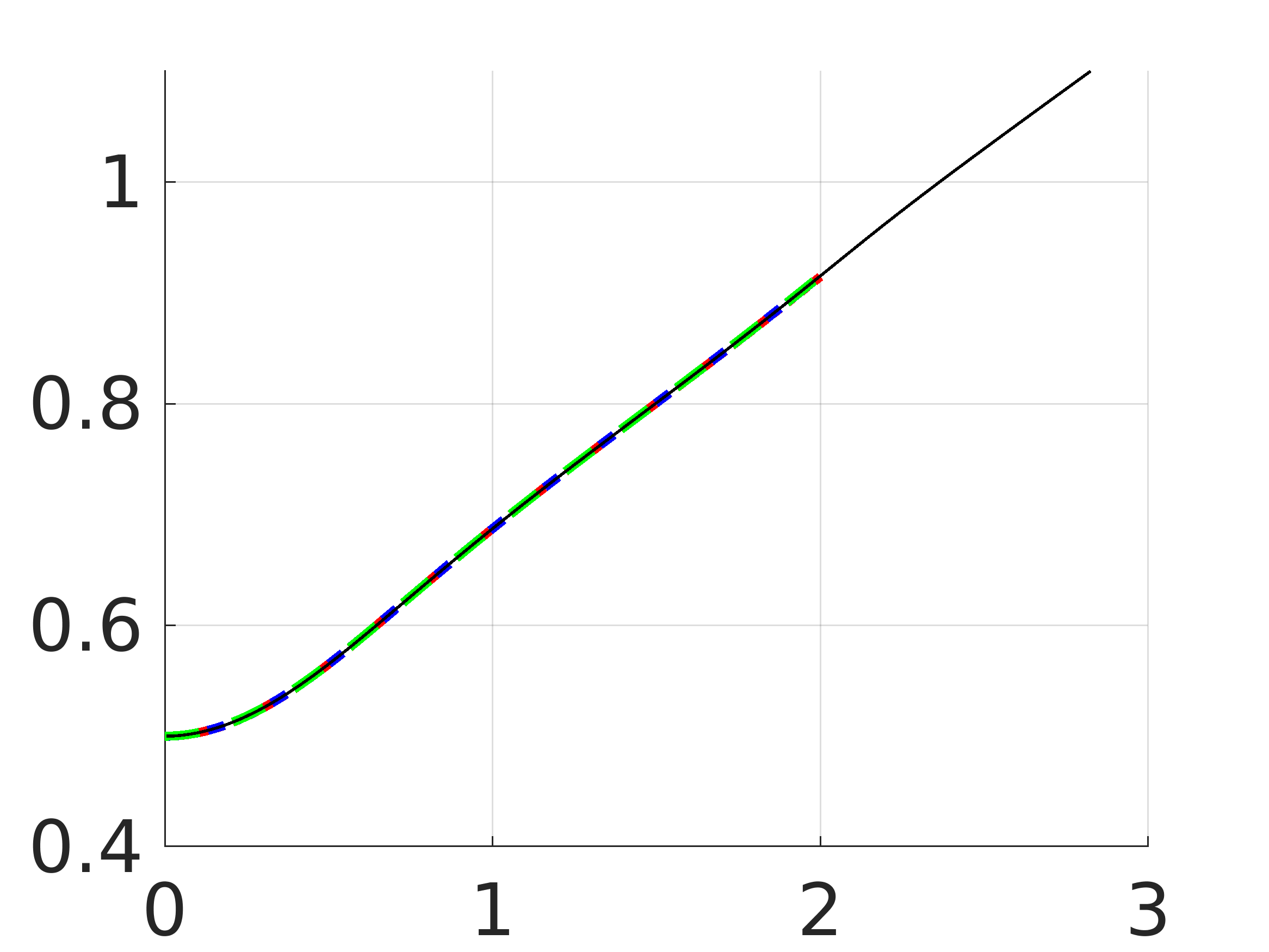}\\[.2ex]
\includegraphics[width=.4\textwidth]{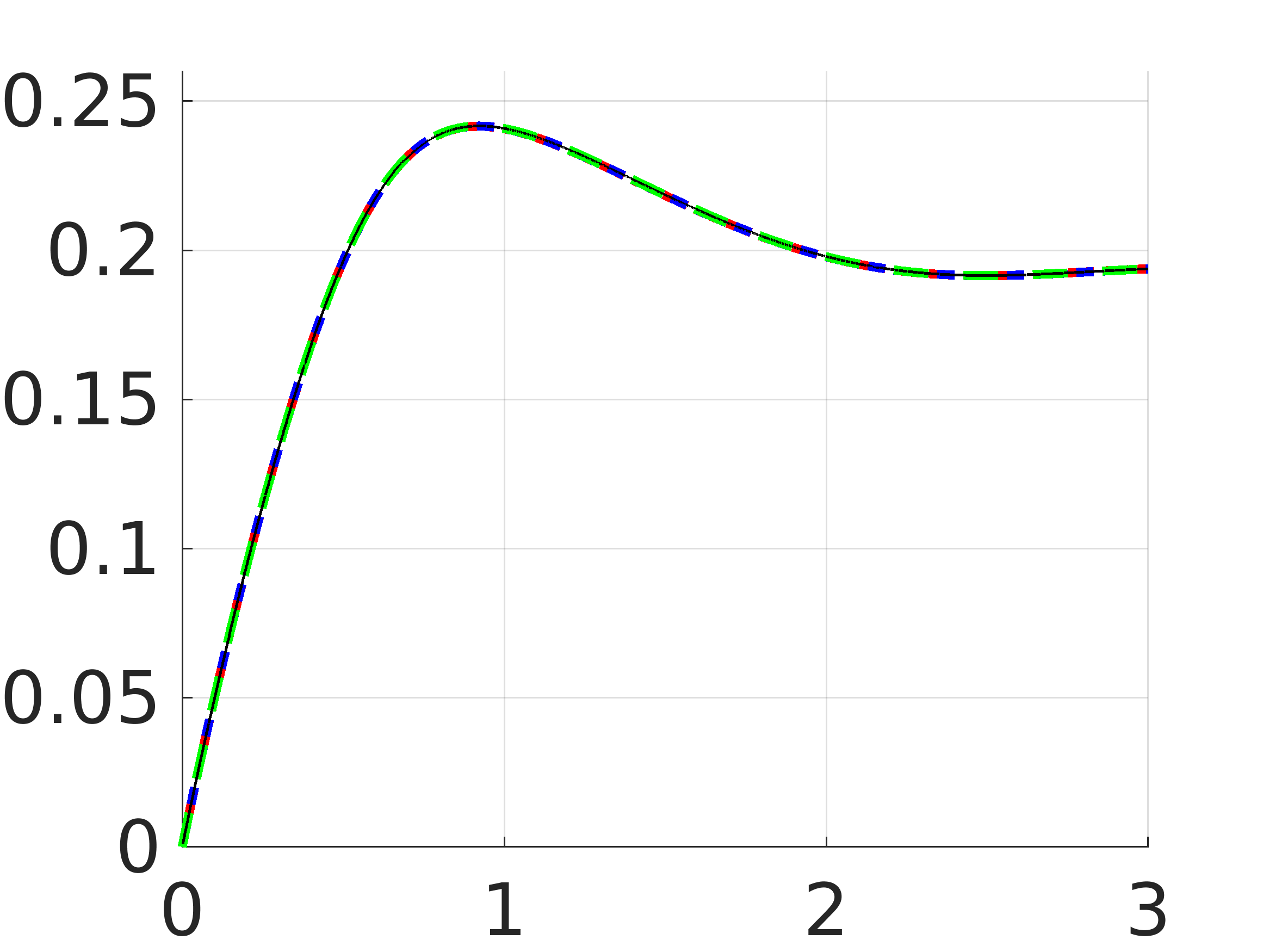}
\includegraphics[width=.4\textwidth]{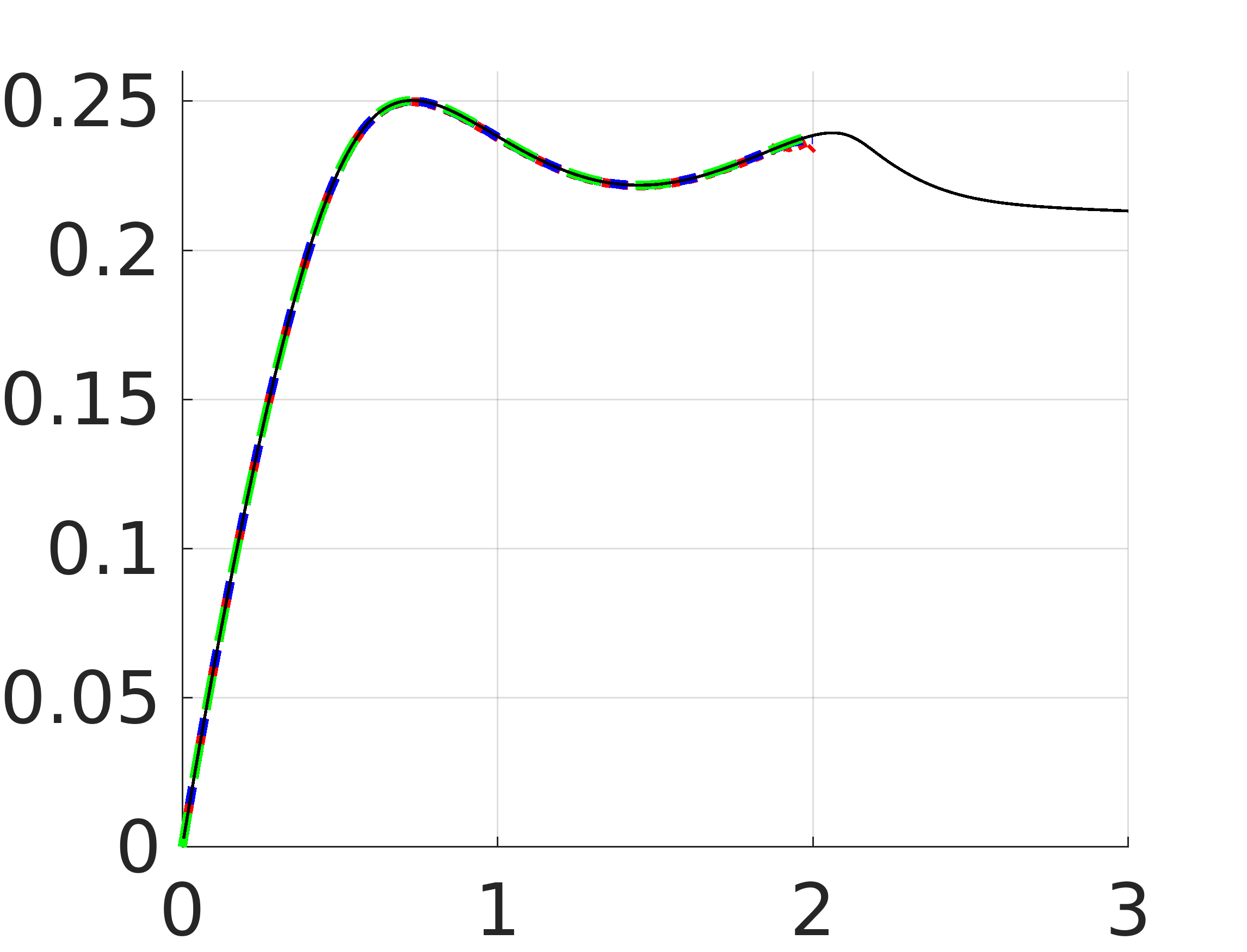}\\[.2ex]
\includegraphics[width=.4\textwidth]{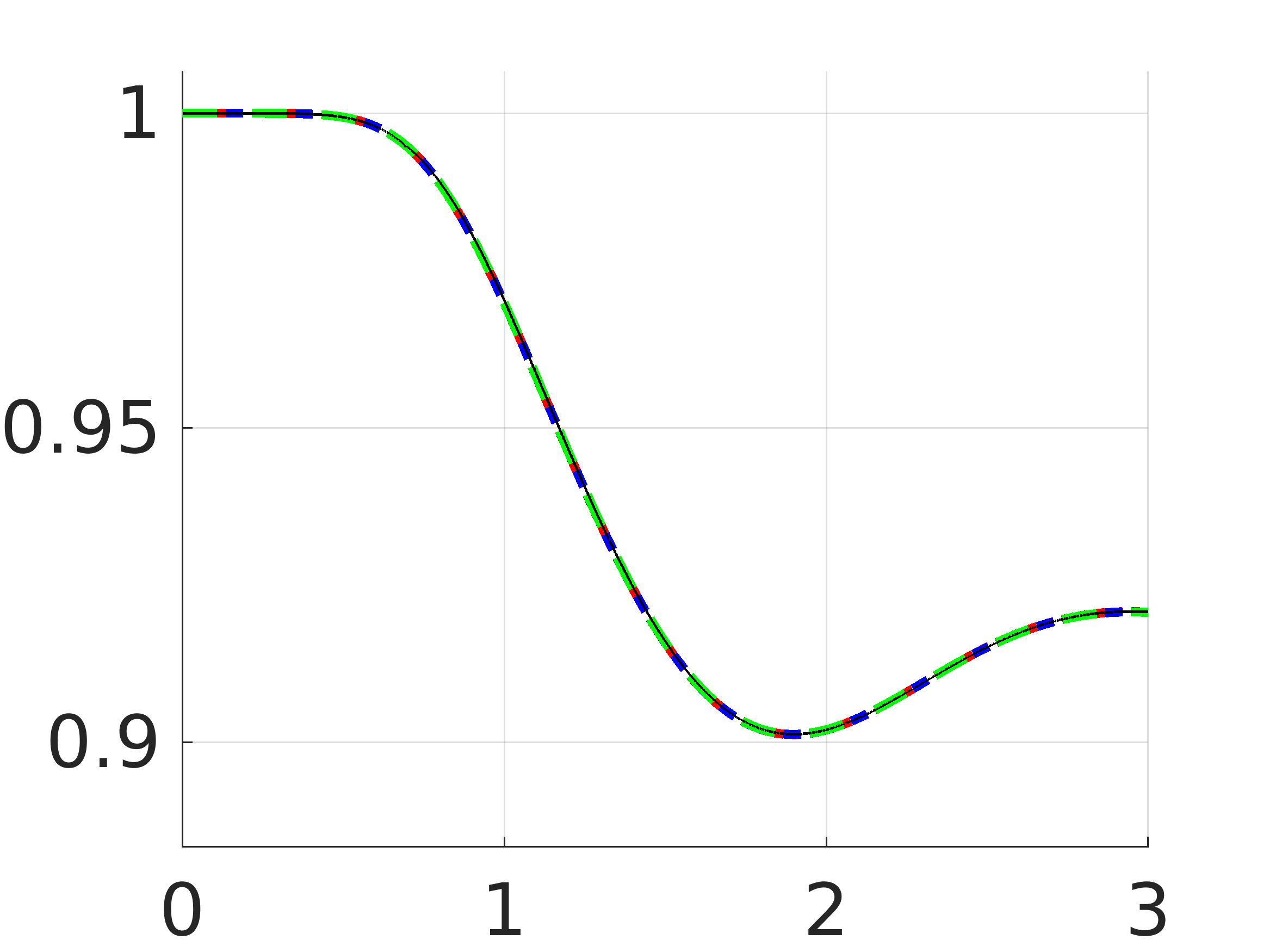}
\includegraphics[width=.4\textwidth]{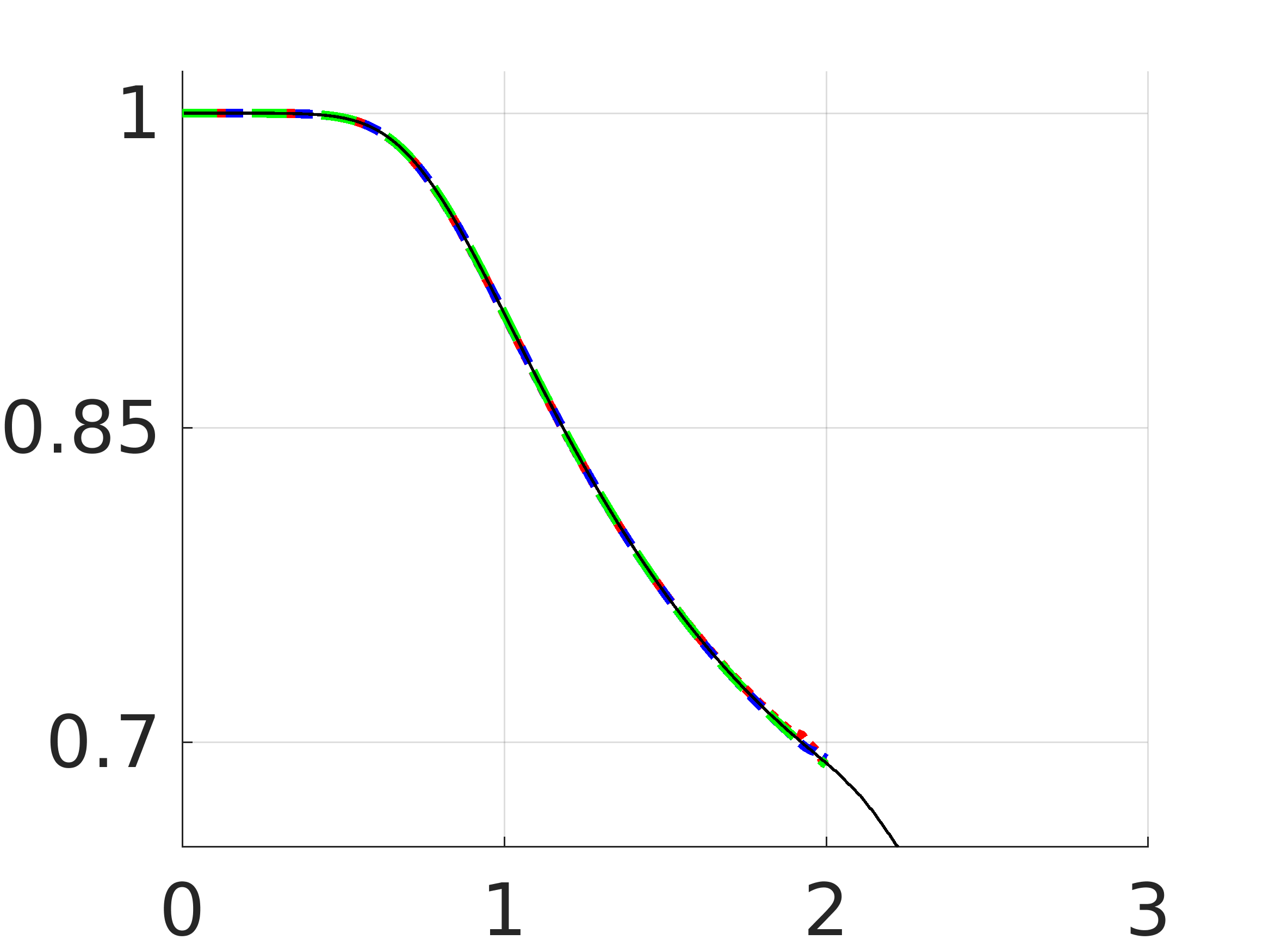}
  \caption{
  Time evolution of benchmark quantities. 
  Left: Test Case 1. Right: Test Case 2.
  Top: center of mass. Middle: rise velocity. Bottom: circularity.
  ({\sf M1}: dotted red, {\sf M2}: dashdotted blue, 
    {\sf M3}: dashed green. 
    MooNMD reference data \cite{Hysing09}: solid black.)
}
\label{fig:bb}
\end{figure}

Finally, Table~\ref{table:bb1} and Table~\ref{table:bb2} show the time and values of the minimum
circularity, maximum rise velocity, and maximum position of the center of
mass achieved during the simulations for Test Case 1 and 2, respectively.
Again, our results are in good agreement with the MooNMD reference data 
\cite{Hysing09}.
\begin{table}[ht!]
\begin{center}
 % \resizebox{\textwidth}{!} 
  {
    \begin{tabular}{lcccc} \hline
     &\hspace{1cm} {\sf M1}\hspace{1cm}  
     &\hspace{1cm} {\sf M2}\hspace{1cm}  
     &\hspace{1cm} {\sf M3}\hspace{1cm}  
     & MooNMD \cite{Hysing09}\\
     \hline
    $\cancel{c}_{\mathrm{min}}$
     & 0.90121  & 0.90123 &0.90125 &0.9013\\
     $t|_{\cancel{c}=\cancel{c}_{\mathrm{min}}}$
  & 1.9000 & 1.8969&1.8984 &1.9000\\
     ${V}_{c,\mathrm{max}}$
    & 0.24154& 0.24161 & 0.24164&0.2417\\
     $t|_{{V_c}={V}_{c,\mathrm{max}}}$
    &0.91875 & 0.92188 &0.92188&0.9239\\
     ${y_c}(t=3)$
    & 1.0817 & 1.0817 &1.0817&1.0817\\
    \hline 
\end{tabular}
}
\end{center}
\caption{
Minimum circularity and maximum rise velocity, with corresponding incidence
times, and the final position of the center of mass for Test Case 1.}
\label{table:bb1}
\end{table}

\begin{table}[ht!]
\begin{center}
 % \resizebox{\textwidth}{!} 
  {
    \begin{tabular}{lcccc} \hline
     &\hspace{1cm} {\sf M1}\hspace{1cm}  
     &\hspace{1cm} {\sf M2}\hspace{1cm}  
     &\hspace{1cm} {\sf M3}\hspace{1cm}  
     & MooNMD \cite{Hysing09}\\
     \hline
    $\cancel{c}_{\mathrm{min}}$
     & 0.68784  & 0.69210 &0.68922 &0.6901\\
     $t|_{\cancel{c}=\cancel{c}_{\mathrm{min}}}$
    & 2.0000 & 2.0000&  2.0000 &2.0000\\
     ${V}_{c,\mathrm{max}}$
    & 0.24978& 0.25001 & 0.25016&0.2502\\
     $t|_{{V_c}={V}_{c,\mathrm{max}}}$
    &0.72500 & 0.73125 &0.72969&0.7317\\
     ${y_c}(t=2)$
    & 0.91473 & 0.91524 &0.91541&0.9154\\
    \hline 
\end{tabular}
}
\end{center}
\caption{
Minimum circularity and maximum rise velocity, with corresponding incidence
times, and the final position of the center of mass for Test Case 2 (up to
time $t=2$).}
\label{table:bb2}
\end{table}

\section{Conclusion and future work}
\label{sec:conclude}
We have presented a  novel ALE-TVNNS-HDG scheme for incompressible 
flow with moving boundaries and interfaces.
Detailed theoretical analysis of the scheme consists of our ongoing work.
We are also planning to investigate 
the extension of this scheme to fluid-structure interactions.
\bibliography{fsi}
\bibliographystyle{siam}
\end{document}